\documentclass[onecolumn, draftcls]{IEEEtran}
\usepackage{amsmath,amsfonts,amssymb,epsfig,color,accents,psfrag}
\usepackage[tight,footnotesize]{subfigure}
\usepackage{lineno}
\usepackage{anysize}
\newtheorem{assumption}{Assumption}
\newtheorem{example}{Example}
\newtheorem{theorem}{Theorem}
\newtheorem{proposition}{Proposition}
\newtheorem{lemma}{Lemma}
\newtheorem{remark}{Remark}
\newcommand{\dspace}{\vspace{-0.5cm}}

\newcommand{\ie}{i.e.}

\DeclareMathAccent{\wtilde}{\mathord}{largesymbols}{"65}
\newcommand{\uest}[1]{\underaccent{\wtilde}{#1}}
\newcommand{\oest}[1]{\widetilde{#1}}

\marginsize{20mm}{20mm}{10mm}{35mm}
%
\ifCLASSINFOpdf
\else
\fi

\hyphenation{op-tical net-works semi-conduc-tor}

\begin{document}
%
\title{Accelerated Gradient Methods\\ for Networked Optimization}
%
%
%

\author{Euhanna Ghadimi,
        ~Iman Shames,
        ~and~Mikael Johansson
\thanks{E.~Ghadimi and M.~Johansson are with the ACCESS Linnaeus Center, Electrical Engineering, Royal Institute of Technology, Stockholm, Sweden.
{\tt\small \{euhanna, mikaelj\}@ee.kth.se}. 
 I.~Shames is with the Department of Electrical and Electronic Engineering, The University of Melbourne, Melbourne, Australia.
{\tt\small iman.shames@unimelb.edu.au}.}}

\maketitle

\begin{abstract}
We develop multi-step gradient methods for
network-constrained optimization of strongly convex functions with Lipschitz-continuous gradients. Given the topology of the underlying network and bounds on the Hessian of the objective function, we determine the algorithm parameters that guarantee the fastest convergence and characterize situations when significant speed-ups can be obtained over the standard gradient method. Furthermore, we quantify how the performance of the gradient method and its accelerated counterpart are affected by uncertainty in the problem data, and conclude that in most cases our proposed method outperforms gradient descent. Finally, we apply the proposed technique to three engineering problems: resource allocation under network-wide budget constraints, distributed averaging, and Internet congestion control. In all cases, we demonstrate that our algorithm converges more rapidly than alternative algorithms reported in the literature.
\end{abstract}

%

%
\IEEEpeerreviewmaketitle

\section{Introduction}

\IEEEPARstart{D}{istributed} optimization has recently attracted significant attention from several  research communities. Examples include the work on network utility maximization for resource allocation in communication networks~\cite{kelly98}, distributed coordination of multi-agent systems~\cite{olfati2007}, collaborative estimation in wireless sensor networks~\cite{Kar12}, distributed machine learning~\cite{Boyd11}, and many others. The majority of these praxes apply gradient or sub-gradient methods to the dual formulation of the decision problem. Although gradient methods are easy to implement and require modest computations, they suffer from slow convergence. In some cases, such as the development of distributed power control algorithms for cellular phones~\cite{CellularPower00}, one can replace gradient methods by fixed-point iterations and achieve improved convergence rates. For other problems, such as average consensus~\cite{Tsitsiklis86}, a number of heuristic methods have  been proposed that improve the convergence time of the standard method~\cite{Cao06,bjornThesis}.  However, we are not interested in tailoring techniques to individual problems; our aim is to develop general-purpose schemes that retain the simplicity of the gradient method, yet improve the convergence factors.

Even if the optimization problem is convex and the subgradient method is guaranteed to converge to an optimal solution, the rate of convergence is very modest.
The convergence rate of the gradient method is improved
if the objective function is differentiable with Lipschitz-continuous gradient, and even more so if the function is also strongly convex. However, for smooth optimization problems several techniques allow for even better convergence rates. One such technique is higher-order methods, such as Newton's method~\cite{Bertsekas:nonlinear}, which use both the gradient and the Hessian of the objective function. Although distributed Newton methods have recently been developed for special problem classes (\emph{e.g.}, \cite{damiano11,Asu011}), they impose large communication overhead to collect global Hessian information.  Another technique is the augmented Lagrangian dual method~\cite{Bertsekas1989}. This method was originally developed to cope with robustness issues of the dual ascent method, but it turns out that different variations of this technique, such as the method of multipliers~\cite{Boyd11}, tend to converge in fewer iterations than gradient descent. Recently a few applications of these algorithms to distributed optimization have been proposed~\cite{Boyd11, ADMMConsensus11}  but convergence rate estimates and optimal algorithm parameters are still unaddressed. A third way to obtain faster convergence is to use \emph{multi-step methods}~\cite{polyak,Bertsekas:nonlinear}. These methods rely only on gradient information but use a history of the past iterates when computing the future ones. This paper explores the latter approach for distributed optimization, and addresses the design, convergence properties, optimal step-size selection, and  robustness of networked multi-step methods. Moreover, we also apply the developed techniques to three important classes of distributed optimization problems.

This paper makes the following contributions. First, we develop an multi-step weighted gradient method that maintains a network-wide constraint on the decision variables throughout the iterations.
The accelerated algorithm is based on the \emph{heavy ball} method by Polyak~\cite{polyak} extended to the networked setting. We derive optimal algorithm parameters, show that the method has linear convergence rate and quantify the improvement in convergence factor over the gradient method. Our analysis shows that method is particularly advantageous when the eigenvalues of the Hessian of the objective function and/or the eigenvalues of the graph Laplacian of the underlying network have a large spread.
Second, we investigate how similar techniques can be used to accelerate dual decomposition across a network of decision-makers. In particular, given smoothness parameters of the objective function, we present closed-form expressions for the optimal parameters of an accelerated gradient method for the dual. Third, we quantify how the convergence properties of the algorithm are affected when the algorithm is tuned using misestimated problem parameters. This robustness analysis shows that the accelerated algorithm endures parameter violations well and in most cases outperforms its non-accelerated counterpart. Finally, we apply the developed algorithms to three case studies: networked resource allocation, consensus, and network flow control. In each application we demonstrate superior performance compared to alternatives from the literature.

The paper is organized as follows. In Section~\ref{sec:problem_formulation}, we introduce our networked optimization problem. Section~\ref{sec:background} reviews multi-step gradient techniques. Section~\ref{sec:weighted_gradient} proposes a multi-step weighted gradient algorithm, establishes conditions for its convergence and derives optimal step-size parameters. Section~\ref{sec:multistep_dual} develops a technique for accelerating the dual problem based on parameters for the (smooth) primal.  Section~\ref{sec:robustness} presents a robustness analysis of the multi-step algorithm in the presence of uncertainty.
Section~\ref{sec:applications} applies the proposed techniques to three engineering problems: resource allocation, consensus and network flow control; numerical results and performance comparisons are presented for each case study. Finally, concluding remarks are given in  Section~\ref{sec:conclusion}.

\section{Assumptions and problem formulation}
\label{sec:problem_formulation}
This paper is concerned with collaborative optimization by a network of decision-makers. Each decision-maker $v$ is endowed with a loss function $f_v:\mathbf{R}\mapsto {\mathbf R}$,  has control of one decision-variable $x_v\in {\mathbf R}$, and collaborates with the others to solve the optimization problem
\begin{align}
	\begin{array}[c]{ll}
	\mbox{minimize} & \sum_{v\in {\mathcal V}} f_v(x_v)\\
	\mbox{subject to} & Ax=b
	\end{array} \label{eqn:problem_formulation}
\end{align}
for given matrices $A \in \mathbf{R}^{m\times n}$ and $b\in {\mathbf R}^m$. We will assume that $b$ lies in the range space of $A$, \emph{i.e.} that there exists at least one decision vector $x$ that satisfies the constraints.

The information exchange between decision-makers is represented by a graph ${\mathcal G}=({\mathcal V}, {\mathcal E})$ with vertex set ${\mathcal V}=\{1,2,\dots, n\}$ and edge set ${\mathcal E}\subseteq {\mathcal V}\times{\mathcal V}$.
%
Specifically, at each time $t$, we will assume that decision-maker $v$ has access to
$\nabla f_w(x_w(t))$ for all its neighbors $w\in {\mathcal N}_v \triangleq \{w \;\vert\; (v,w)\in {\mathcal E}\}$.

Most acceleration techniques in the literature (\emph{e.g.}~\cite{Nesterov04,Nesterov05,Devolder11_Double}) require that the loss functions are smooth and convex. Similarly, we will make the following assumptions:
\begin{assumption} \label{ass:objective_bounds}
Each loss function $f_v$ is convex and twice continuously differentiable with
\begin{align}
l_v \leq \nabla^2 f_v(x_v) \leq u_v, \quad \forall x_v
\label{eq:Hessian_Bounds}
\end{align}
for some positive real constants $l_v, u_v$ with $0<l_v\leq u_v$.
\end{assumption}

Some remarks are in order. Let $l=\min_{v\in {\mathcal V}} l_{v}$, $u=\max_{v\in {\mathcal V}} u_v$ and define $f(x){\mathrel{\mathop:}=} \sum_{v\in {\mathcal V}} f_v(x_v)$. Then, Assumption~\ref{ass:objective_bounds} ensures that $f(x)$ is strongly convex with modulus $l$:
\begin{align*}
	f(y) &\geq f(x) +(y-x)^{\top}\nabla f(x) + \frac{l}{2}\Vert y-x\Vert^2 \quad\forall (x,y)
\end{align*}
and that its gradient is Lipschitz-continuous with constant $u$:
\begin{align*}
	f(y) &\leq f(x)+(y-x)^{\top}\nabla f(x)+\frac{u}{2}\Vert y-x\Vert^2 \quad \forall (x,y)
\end{align*}
See, \emph{e.g},~\cite[Lemma 1.2.2 and Theorem 2.1.11]{Nesterov04} for details. Similarly, the Hessian of $f$ satisfies
\begin{align}
	lI &\leq \nabla^2f(x)\leq uI \quad \forall x \label{eqn:hessian_bounds}
\end{align}
Furthermore, Assumption~\ref{ass:objective_bounds} guarantees that (\ref{eqn:problem_formulation}) is a convex optimization problem whose unique optimizer $x^{\star}$ satisfies
\begin{align}
	Ax^{\star} &= b, \qquad \nabla f(x^{\star})=A^{\top}\mu^{\star}
\label{eqn:fixed_point_feasibility_cond}
\end{align}
where $\mu^{\star}\in \mathbb{R}^m$ is the (unique) optimal Lagrange multiplier for the linear constraints.

\section{Background on multi-step methods}
\label{sec:background}
The basic gradient method for unconstrained minimization of a convex function $f(x)$ takes the form
\begin{align}
	x(k+1) &= x(k)-\alpha \nabla f(x(k)) \label{eqn:basic_gradient},
\end{align}
where $\alpha>0$ is a fixed step-size parameter. Assume that $f(x)$ is strongly convex with modulus $l$ and has Lipschitz-continuous gradient with constant $u$. Then if $\alpha<2/u$, the sequence $\{ x(k)\}$ generated by  (\ref{eqn:basic_gradient}) converges to $x^{\star}$ at linear rate, \emph{i.e.} there exists a convergence factor $q\in (0,1)$ such that
\[
\Vert x(k+1)-x^{\star}\Vert \leq q \Vert x(k)-x^{\star}\Vert
\] for all $k$. The smallest convergence factor is $q=(u-l)/(u+l)$ obtained for the step-size $\alpha=2/(l+u)$~(see, \emph{e.g.},~\cite{polyak}).

While the convergence rate cannot be improved unless higher-order information is considered~\cite{polyak}, the convergence factor $q$ can be meliorated by accounting for the history of iterates when computing the ones to come. Methods in which the next iterate depends not only on the current iterate but also on the preceding ones are called \emph{multi-step methods}. The simplest multi-step extension of the gradient method is
\begin{align}
	x(k+1) &= x(k)-\alpha \nabla f(x(k)) + \beta \left( x(k)-x(k-1) \right) \label{eqn:multi_step}
\end{align}
for fixed step-size parameters $\alpha>0$ and $\beta>0$. This technique, originally proposed by Polyak, is sometimes called the heavy-ball method based on the physical interpretation of the added ``momentum term''.  For a centralized set-up, Polyak derived the optimal step-size parameters and showed that these guaranteed a convergence factor of $(\sqrt{u}-\sqrt{l})/(\sqrt{u}+\sqrt{l})$ , which is always smaller than the convergence factor for the gradient method and significantly so when $\sqrt{u}/\sqrt{l}$ is large.

In the following sections, we will develop multi-step gradient methods for network-constrained optimization, analyze their convergence properties, and develop techniques for finding the optimal algorithm parameters.

\section{A multi-step weighted gradient method} \label{sec:weighted_gradient}
In the absence of constraints, (\ref{eqn:problem_formulation}) is trivial to solve since the objective function is separable and each decision-maker could simply minimize its loss independently of the others. Hence, it is the existence of constraints that makes (\ref{eqn:problem_formulation}) challenging. In the optimization literature, there are essentially two ways of dealing with constraints. One way is to project the iterates onto the constraint set to maintain feasibility at all times; such a method will be developed in this section. The other way is to use dual decomposition to eliminate couplings between decision-makers and solve the associated dual problem; we will return to such techniques in Section~\ref{sec:multistep_dual}.

Computing the Euclidean projection onto the constraint of (\ref{eqn:problem_formulation}) typically requires the full decision vector $x$, which is not available to the decision-makers in our setting. An alternative, explored \emph{e.g.} in~\cite{XiB:06}, is to consider \emph{weighted gradient methods} which use a linear combination of the information available to nodes to ensure that iterates remain feasible. For our problem (\ref{eqn:problem_formulation}) the weighted gradient method takes the form
\begin{align}
	x(k+1) &= x(k)-\alpha W \nabla f(x(k)) \label{eqn:scaled_gradient_iteration}
\end{align}
where $W\in \mathbf{R}^{n\times n}$ is a weight matrix that satisfies the sparsity constraint that $W_{vw}=0$ if $v\neq w$ and $(v,w)\not\in {\mathcal E}$. In this way, the iterations (\ref{eqn:scaled_gradient_iteration}) read
\begin{align*}
	x_v(k+1) &= x_v(k) - \alpha \sum_{w\in v\cup {\mathcal N}_v} W_{vw} \nabla f_w(x_w(k))
\end{align*}
and can be executed by individual decision-makers based on the information that they have access to. If $W$ satisfies
\begin{align}
	AW &= 0 && WA^{\top}=0 \label{eqn:weight_constraints}
\end{align}
then any initially feasible $x(0)$ will always remain feasible. While the constraints on $W$ might appear restrictive, it is possible to construct appropriate weight matrices for many applications. The following examples describe two such cases.
\begin{example} \label{ex:total_budget_constraint}
When the decision-makers are only constrained by a total resource budget, (\ref{eqn:problem_formulation}) reduces to
\begin{align*}
	\begin{array}[c]{ll}
	\mbox{minimize} & \sum_{v\in {\mathcal V}} f_v(x_v)\\
	\mbox{subject to} & \sum_{v\in {\mathcal V}} x_v = x_{\mathrm{tot}}
	\end{array}
\end{align*}
A distributed gradient method for this problem was developed in~\cite{HSS:80}. Later,~\cite{XiB:06} interpreted these as a weighted gradient method and developed techniques for computing the weight matrix $W$ that minimizes the guaranteed convergence factor.
\end{example}

\begin{example}\label{ex:consensus}
Consider a scenario where the decision-makers have to find a common decision $x$ that minimizes the total cost
\begin{align*}
	\begin{array}[c]{ll}
	\mbox{minimize} & \sum_{v\in {\mathcal V}} f_v(x)
	\end{array}
\end{align*}
We can rewrite this problem in our standard form (\ref{eqn:problem_formulation}) by introducing local decision variables $x_v$:
\begin{align}
	\begin{array}[c]{ll}
	\mbox{minimize} &\sum_{v\in {\mathcal V}} f_v(x_v)\\
	\mbox{subject to} & x_v=x_w \qquad\quad \forall (v,w)\in {\mathcal E}
	\end{array} \label{eqn:ex2_localx}
\end{align}
Note that in vector form, the constraint of (\ref{eqn:ex2_localx}) reads $Ax=0$ where $A\in {\mathbf R}^{\vert {\mathcal E} \vert \times \vert {\mathcal V}\vert}$ is the incidence matrix of the graph ${\mathcal G}$. Next, we will show that the gradient iterations for the dual problem of (\ref{eqn:ex2_localx}) has the structure of a weighted gradient method in the primal variables. To this end, we form the Lagrangian  $L(x,\mu)=f(x)-\mu^{\top}Ax$ and the dual function
\begin{align*}
	d(\mu) &= \inf_x L(x,\mu)=\inf_x f(x)-\mu^{\top}Ax
\end{align*}
Under Assumption~\ref{ass:objective_bounds}, the Lagrangian has a unique minimizer $x^{\star}(\mu)=(\nabla f)^{-1}(A^{\top}\mu)$ and the dual function is continuously differentiable with $\nabla d(\mu)=-Ax^{\star}(\mu)$. Hence, the iterations
\begin{align*}
	\mu(k+1) &= \mu(k)-\alpha Ax(k)\\
	x(k+1) &= \nabla f^{-1}(A^{\top}\mu(k+1))
\end{align*}
will converge to a primal-dual optimal pair for appropriately chosen step-size $\alpha$. Introducing $z(k) := A^{\top}x(k)$ and multiplying both sides of the iterations by $A^{\top}$, we obtain
\begin{align}
	\begin{array}[l]{lcr}
	z(k+1) = z(x)-\alpha W \nabla f^{-1}(z(k))\\
	x(k+1) = \nabla f^{-1}(z(k+1))
	\end{array} \label{eqn:ex2_dual_iterations}
\end{align}
Note that $W = A^{\top}A$ is the graph Laplacian of ${\mathcal G}$ and that $W$ satisfies the sparsity constraint for distributed execution detailed above. One can readily verify that $W$ has a simple eigenvalue at $0$ for which $W\mathbf{1}=\mathbf{0}$.

One important application of this technique is to distributed averaging, in which nodes should converge to the network-wide average of constants $c_v$ held by each node $v\in {\mathcal V}$. This average can be found by solving (\ref{eqn:ex2_localx}) with $f_v(x_v)=(x_v-c_v)^2/2$ (since its optimal solution is the average of the constants $c_v$). The corresponding iterations
 (\ref{eqn:ex2_dual_iterations}) read
\begin{align*}
	z(k+1) &= z(k)-\alpha W\left( z(k)-c\right)\\
	x(k+1) &= z(k+1) + c
\end{align*}
We will return to these iterations and their accelerated counterparts in Section~\ref{sec:applications}.
\end{example}

\subsection{A multi-step weighted gradient method and its convergence}
The examples indicate that variants of the weighted gradient method with improved convergence factors could also allow to speed up the convergence of network-wide resource allocation and consensus processes.
To this end, we consider the following multi-step variant of the weighted gradient iteration
\begin{align}
 x(k+1) &= x(k)-\alpha W \nabla f(x) + \beta\left( x(k)-x(k-1)\right) \label{eqn:multistep_scaled_gradient}
\end{align}
Under the sparsity constraint on $W$ detailed above, these iterations can be implemented by individual decision-makers. Moreover, (\ref{eqn:weight_constraints}) ensures that if $x(1)$ and $x(0)$ satisfy the constraints of (\ref{eqn:problem_formulation}) then every iterate produced by (\ref{eqn:multistep_scaled_gradient}) will also be feasible. The next theorem characterizes the convergence of the iterations (\ref{eqn:multistep_scaled_gradient}) and derive optimal step-size parameters.

\begin{theorem} \label{thm:multi_step}
Consider the optimization problem (\ref{eqn:problem_formulation}) under Assumption~\ref{ass:objective_bounds}, and let $x^{\star}$ denote its unique optimizer. Assume that $W$ has $m<n$ eigenvalue at $0$ and satisfies $AW=0$ and $WA^{\top}=0$. Let $H =\nabla^2 f(x^{\star})$ and $0=\lambda_1(WH)=\cdots =\lambda_{m}(WH)< \lambda_{m+1}(WH)= \underline{\lambda} \leq \cdots\leq \lambda_n(WH)=\overline{\lambda}$ be the magnitude of eigenvalues of $WH$. Then, for
\begin{align*}
	0&\leq \beta\leq 1, & 0<\alpha<\frac{2}{u}\frac{(1+\beta)}{\lambda_n(W)}
\end{align*}
the iterates (\ref{eqn:multistep_scaled_gradient}) converge to $x^{\star}$ at linear rate
\begin{align*}
	\Vert x(k+1)-x^{\star}\Vert &\leq q \Vert x(k)-x^{\star}\Vert \quad \forall k\geq 0
\end{align*}
with $q=\max\left\{ \sqrt{\beta}, \vert 1+\beta-\alpha\underline{\lambda}\vert-\sqrt{\beta},\vert 1+ \beta-\alpha\overline{\lambda}\vert-\sqrt{\beta}\right\}$.
Moreover, the minimal value of $q$ is
\begin{align*}
	q^{\star} &= \frac{\sqrt{\overline{\lambda}}-\sqrt{\underline{\lambda}}}{\sqrt{\overline{\lambda}}+\sqrt{\underline{\lambda}}}
\end{align*}
obtained for step-sizes $\alpha=\alpha^{\star}$ and $\beta=\beta^{\star}$ where
\begin{align*}
	\alpha^{\star} =\left(\frac{2}{\sqrt{\overline{\lambda}}+\sqrt{\underline{\lambda}}} \right)^2, \quad
\beta^{\star}=\left(\frac{\sqrt{\overline{\lambda}}-\sqrt{\underline{\lambda}}}{\sqrt{\overline{\lambda}}+\sqrt{\underline{\lambda}}} \right)^2
\end{align*}
\end{theorem}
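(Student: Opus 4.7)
The plan is to mirror Polyak's classical heavy-ball analysis, adapted to the constrained/weighted setting. First I would exploit the KKT condition $\nabla f(x^\star)=A^{\top}\mu^{\star}$ from~\eqref{eqn:fixed_point_feasibility_cond} together with the hypothesis $WA^{\top}=0$ to obtain $W\nabla f(x^\star)=0$. Combining this with a mean-value expansion
\[
\nabla f(x(k))-\nabla f(x^\star)=\widetilde{H}(k)\,(x(k)-x^\star),\qquad \widetilde{H}(k)=\int_{0}^{1}\!\nabla^{2}f\bigl(x^\star+t(x(k)-x^\star)\bigr)\,dt,
\]
gives, for $e(k):=x(k)-x^{\star}$, the error recursion
\[
e(k+1)=\bigl[(1+\beta)I-\alpha W\widetilde{H}(k)\bigr]\,e(k)-\beta\,e(k-1).
\]
A second observation is that, since $AW=0$ and $Ax(0)=Ax(1)=b$, every $e(k)$ lies in $\mathcal{N}(A)$, which is precisely the invariant subspace on which only the non-zero eigenvalues of $WH$ (the $n-m$ of them lying in $[\underline{\lambda},\overline{\lambda}]$) matter.

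Next I would recast the two-term recursion as $\xi(k+1)=T(k)\,\xi(k)$ with $\xi(k)=[e(k)^{\top}\;e(k-1)^{\top}]^{\top}$ and
\[
T(k)=\begin{pmatrix}(1+\beta)I-\alpha W\widetilde{H}(k) & -\beta I\\ I & 0\end{pmatrix}.
\]
Because $W$ is symmetric positive semidefinite (as in the running examples) and $H=\nabla^{2}f(x^\star)$ is symmetric positive definite, $WH$ is similar to the symmetric matrix $H^{1/2}WH^{1/2}$; a simultaneous change of basis then block-diagonalises $T$ into a family of $2\times 2$ blocks, one per eigenvalue $\lambda_i$ of $WH$, each with characteristic polynomial
\[
\mu^{2}-(1+\beta-\alpha\lambda_i)\,\mu+\beta=0.
\]

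The third step is to bound the roots of this quadratic uniformly in $\lambda_i\in[\underline{\lambda},\overline{\lambda}]$. When the discriminant $(1+\beta-\alpha\lambda_i)^{2}-4\beta$ is negative, the roots are complex conjugates of modulus $\sqrt{\beta}$; when non-negative, the dominant real root can be controlled by $|1+\beta-\alpha\lambda_i|-\sqrt{\beta}$, yielding the stated $q$. The admissibility conditions $0\leq\beta\leq 1$ and $0<\alpha<2(1+\beta)/(u\lambda_n(W))$ are exactly those that keep $q<1$ (noting $\overline{\lambda}\leq u\lambda_n(W)$). For the optimal parameters, I would apply the standard min--max argument: the minimizer equalises the real-root contributions at $\underline{\lambda}$ and $\overline{\lambda}$ and matches them to $\sqrt{\beta}$, which forces the discriminant to vanish at both endpoints. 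Solving $(1+\beta-\alpha\underline{\lambda})^{2}=(1+\beta-\alpha\overline{\lambda})^{2}=4\beta$ produces the closed forms $\alpha^{\star},\beta^{\star}$ and $q^{\star}=(\sqrt{\overline{\lambda}}-\sqrt{\underline{\lambda}})/(\sqrt{\overline{\lambda}}+\sqrt{\underline{\lambda}})$.

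The main obstacle I anticipate is promoting the eigenvalue/spectral-radius bound on $T(k)$, which a priori yields only an asymptotic rate on the augmented state $\xi(k)$, into the stated \emph{one-step} contraction $\|e(k+1)\|\leq q\|e(k)\|$ for every $k\geq 0$. This requires either (i) introducing a suitable weighted norm/Lyapunov function on $\xi(k)$ whose sublevel sets absorb both error components, or (ii) handling the time-varying $\widetilde{H}(k)$ uniformly by using $lI\preceq \widetilde{H}(k)\preceq uI$ from Assumption~\ref{ass:objective_bounds} to replace $WH$ by its worst-case conditioning on $\mathcal{N}(A)$. Reconciling this bookkeeping with the clean formula for $q$ is where the delicate work lies.
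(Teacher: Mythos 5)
Your proposal follows essentially the same route as the paper's proof: linearize around $x^{\star}$ using $W\nabla f(x^{\star})=0$ (the paper uses a Taylor expansion with an $o(\cdot)$ remainder where you use the integral mean-value form), stack consecutive errors into an augmented state driven by the block companion matrix with top-left block $(1+\beta)I-\alpha WH$, pass via the similarity $WH\sim H^{1/2}WH^{1/2}$ to the per-eigenvalue quadratic $\mu^{2}-(1+\beta-\alpha\lambda_i)\mu+\beta=0$, bound its roots by $\max\{\sqrt{\beta},\,\vert 1+\beta-\alpha\lambda_i\vert-\sqrt{\beta}\}$, and equalize the endpoint terms to obtain $\alpha^{\star}$, $\beta^{\star}$ and $q^{\star}$. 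The obstacle you flag---upgrading the spectral-radius, locally linearized analysis of the (time-varying) augmented system to the stated one-step Euclidean contraction---is not actually resolved in the paper either, whose proof is the same local eigenvalue argument.
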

\begin{proof}
See the appendix for all the proofs.
\end{proof}
Similar to the discussion in Section~\ref{sec:background}, it is interesting to investigate when (\ref{eqn:multistep_scaled_gradient}) significantly improves over the single-step algorithm. In~\cite{XiB:06}, it is shown that the best convergence factor of the weighted gradient iteration (\ref{eqn:scaled_gradient_iteration}) is
\begin{align*}
	q_0^{\star} &= \frac{\overline{\lambda} - \underline{\lambda}}{\overline{\lambda}+\underline{\lambda}}
\end{align*}
One can verify that $q^{\star}\leq q_0^{\star}$, \emph{i.e.} the multi-step iterations can always be tuned to converge faster. Moreover, the improvement in convergence factor depends on the quantity $\kappa=\overline{\lambda}/\underline{\lambda}$: when $\kappa$ is large, the speed-up is roughly proportional to $\sqrt{\kappa}$. In the networked setting, there are two reasons for a large value of $\kappa$. One is simply that the Hessian of the objective function is ill-conditioned, so that the ratio $u/l$ is large. The other is that the matrix $W$ is ill-conditioned, \emph{i.e.} that $\lambda_n(W)/\lambda_{m+1}(W)$ is large.  As we have seen in the examples, the graph Laplacian is often a valid choice for $W$. Thus, the topology of the underlying graph directly impacts the convergence rate (and the convergence rate improvements) of the multi-step weighted gradient method. We will return to this in detail in Section~\ref{sec:applications}.

In many applications, we will not know $H=\nabla^2 f(x^{\star})$, but only bounds such as (\ref{eqn:hessian_bounds}). The next result can then be useful
\begin{proposition} \label{prop:multistep_restricted}
Let $\underline{\lambda}_W=l\lambda_{m+1}(W)$ and $\overline{\lambda}_W=u \lambda_n(W)$. Then $\underline{\lambda}_W\leq \underline{\lambda}$ and $\overline{\lambda}_W\geq \overline{\lambda}$. Moreover, the step-sizes
\begin{align*}
\alpha&= \left( \frac{2}{\sqrt{\overline{\lambda}_W}+\sqrt{\underline{\lambda}_W}}\right)^2, \quad \beta=
\left(\frac{\sqrt{\overline{\lambda}_W}-\sqrt{\underline{\lambda}_W}}{\sqrt{\overline{\lambda}_W}+\sqrt{\underline{\lambda}_W}} \right)^2
\end{align*}
guarantee that~\eqref{eqn:multistep_scaled_gradient} converges to $x^{\star}$ at linear rate
\begin{align*}
	\Vert x(k+1)-x^{\star}\Vert &\leq \tilde{q} \Vert x(k)-x^{\star}\Vert \quad \forall k,
\end{align*}
where
\begin{align*}
	\tilde{q} &= \frac{\sqrt{\overline{\lambda}_W}-\sqrt{\underline{\lambda}_W}}{\sqrt{\overline{\lambda}_W}+\sqrt{\underline{\lambda}_W}}
\end{align*}
\end{proposition}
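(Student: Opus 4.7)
The proof splits into two stages: first establish the spectral bounds $\underline{\lambda}_W \leq \underline{\lambda}$ and $\overline{\lambda}_W \geq \overline{\lambda}$; then plug the proposed step sizes into Theorem~\ref{thm:multi_step} and check that its convergence factor collapses to $\tilde{q}$.

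For the spectral bounds, I would exploit symmetry and positive semidefiniteness of $W$ (which holds in the motivating examples, including the graph Laplacian and the weight matrices of~\cite{XiB:06}) to replace $WH$ by the similar, symmetric matrix $W^{1/2} H W^{1/2}$, using the elementary fact that $AB$ and $BA$ share the same nonzero eigenvalues. The Hessian bound $lI \preceq H \preceq uI$ then yields the Loewner sandwich $l W \preceq W^{1/2} H W^{1/2} \preceq u W$, and Weyl's monotonicity (or the Courant--Fischer min--max principle) gives $l \lambda_j(W) \leq \lambda_j(W^{1/2} H W^{1/2}) \leq u \lambda_j(W)$ for every index $j$. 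Specializing to $j = m+1$ and $j = n$ yields the two claimed inequalities.

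For the convergence statement, I would first verify that the proposed pair $(\alpha,\beta)$ is admissible for Theorem~\ref{thm:multi_step}: $\beta \in [0,1)$ is immediate, and the bound $\alpha < (2/u)(1+\beta)/\lambda_n(W)$ reduces by algebra to $\underline{\lambda}_W > 0$, which holds since $l > 0$ and $\lambda_{m+1}(W) > 0$. Writing $a = \sqrt{\overline{\lambda}_W}$, $b = \sqrt{\underline{\lambda}_W}$, a short computation shows that the affine map $\lambda \mapsto 1 + \beta - \alpha \lambda$ evaluates to $+2\sqrt{\beta}$ at $\lambda = \underline{\lambda}_W$ and to $-2\sqrt{\beta}$ at $\lambda = \overline{\lambda}_W$. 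By affinity and the spectral sandwich from Stage~1, $|1 + \beta - \alpha \lambda| \leq 2\sqrt{\beta}$ at both $\lambda = \underline{\lambda}$ and $\lambda = \overline{\lambda}$. Hence each of the three terms inside the maximum appearing in Theorem~\ref{thm:multi_step} is bounded by $\sqrt{\beta} = \tilde{q}$, giving the claim.

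The main subtlety lies in Stage~1: the similarity step requires caution because $W$ is singular (it has an $m$-dimensional null space shared with $A^\top$), so I would rely on the $AB$-vs-$BA$ spectral identity rather than attempt to invert the square root. Once the spectral bounds are in hand, Stage~2 is essentially routine algebra, mirroring the endpoint calculation that underlies the optimal step-size choice in Theorem~\ref{thm:multi_step}.
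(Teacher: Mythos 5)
Your proof is correct, and its second stage (verifying admissibility of $(\alpha,\beta)$ in Theorem~\ref{thm:multi_step} and checking that the affine map $\lambda\mapsto 1+\beta-\alpha\lambda$ takes the values $\pm 2\sqrt{\beta}$ at the endpoints $\underline{\lambda}_W,\overline{\lambda}_W$, so that all three terms in the maximum are at most $\sqrt{\beta}=\tilde q$) is exactly the ``rest of the proof is similar to Theorem~\ref{thm:multi_step}'' step that the paper omits, just made explicit. Where you differ is in the spectral bounds. The paper keeps the symmetrization $WH\sim H^{1/2}WH^{1/2}$ used in the proof of Theorem~\ref{thm:multi_step} (a genuine similarity, since $H\succ 0$) and then invokes Ostrowski's theorem from Horn--Johnson: $\lambda_k(H^{1/2}WH^{1/2})=\theta_k\lambda_k(W)$ with $l\le\lambda_1(H)\le\theta_k\le\lambda_n(H)\le u$, which gives $\underline{\lambda}\ge l\lambda_{m+1}(W)$ and $\overline{\lambda}\le u\lambda_n(W)$ directly. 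You instead pass to $W^{1/2}HW^{1/2}$ via the $AB$-versus-$BA$ identity (correctly avoiding inversion of the singular $W^{1/2}$), sandwich it as $lW\preceq W^{1/2}HW^{1/2}\preceq uW$, and apply Weyl monotonicity; since $W^{1/2}HW^{1/2}$ is PSD with the same rank $n-m$ as $W$, the index-$(m+1)$ and index-$n$ comparisons indeed identify $\underline{\lambda}$ and $\overline{\lambda}$. The two routes are equally rigorous; yours trades the citation of Ostrowski's theorem for the more elementary Loewner-order/Weyl argument, at the price of explicitly requiring $W$ symmetric positive semidefinite so that $W^{1/2}$ exists --- an assumption the paper also uses implicitly (its Lemma on the spectrum of $WH$ in the appendix relies on $W\succeq 0$), so nothing is lost in generality, while the paper's congruence argument needs only $H\succ 0$ and the symmetry of $W$.
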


\subsection{Optimal weight selection for the multi-step method}
The results in the previous subsection provide optimal step-size parameters $\alpha$ and $\beta$ for a given weight matrix $W$. However, the expressions for the associated convergence factors depend on the eigenvalues of $WH$ and optimizing the entries in $W$ jointly with the step-size parameters can yield even further speed-ups. We make the following observation.

\begin{proposition}
\label{prop:multistep_condition_number_minimization}
Under the hypotheses of Proposition~\ref{prop:multistep_restricted},
\begin{itemize}
\item[(i)] If $H$ is known, then minimizing the convergence factor $q^{\star}$ is equivalent to minimizing $\overline{\lambda}/\underline{\lambda}$.
\item[(ii)] If $H$ is not known, while $l$ and $u$ in~\eqref{eqn:hessian_bounds} are, then the weight matrix that minimizes $\tilde{q}$ is the one with minimal value of $\lambda_n(W)/\lambda_{m+1}(W)$.
\end{itemize}
\end{proposition}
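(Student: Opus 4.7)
The plan is to observe that the convergence factors in Theorem~\ref{thm:multi_step} and Proposition~\ref{prop:multistep_restricted} depend on the underlying spectra only through a condition-number-like ratio, and then to verify that the map from this ratio to the convergence factor is monotone. Concretely, I would divide numerator and denominator of
\[
q^{\star} = \frac{\sqrt{\overline{\lambda}}-\sqrt{\underline{\lambda}}}{\sqrt{\overline{\lambda}}+\sqrt{\underline{\lambda}}}
\]
by $\sqrt{\underline{\lambda}}$, yielding $q^{\star} = (\sqrt{\kappa}-1)/(\sqrt{\kappa}+1)$, where $\kappa=\overline{\lambda}/\underline{\lambda}\geq 1$.

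The key step is then to show that $\varphi(\kappa):=(\sqrt{\kappa}-1)/(\sqrt{\kappa}+1)$ is a strictly increasing function of $\kappa$ on $[1,\infty)$. The cleanest way is to rewrite $\varphi(\kappa)=1-2/(\sqrt{\kappa}+1)$, from which monotonicity in $\kappa$ is immediate (alternatively, a direct derivative computation gives $\varphi'(\kappa)=1/\bigl(\sqrt{\kappa}(\sqrt{\kappa}+1)^{2}\bigr)>0$). Consequently, choosing $W$ to minimize $q^{\star}$ is equivalent to choosing $W$ to minimize $\kappa=\overline{\lambda}/\underline{\lambda}$, which proves part~(i).

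For part~(ii), the same monotonicity argument applies to
\[
\tilde{q} = \frac{\sqrt{\overline{\lambda}_{W}}-\sqrt{\underline{\lambda}_{W}}}{\sqrt{\overline{\lambda}_{W}}+\sqrt{\underline{\lambda}_{W}}} = \varphi(\kappa_{W}), \qquad \kappa_{W}:=\frac{\overline{\lambda}_{W}}{\underline{\lambda}_{W}}.
\]
Substituting the definitions $\overline{\lambda}_{W}=u\lambda_{n}(W)$ and $\underline{\lambda}_{W}=l\lambda_{m+1}(W)$ from Proposition~\ref{prop:multistep_restricted} gives
\[
\kappa_{W} = \frac{u}{l}\cdot\frac{\lambda_{n}(W)}{\lambda_{m+1}(W)}.
\]
Since $u/l$ is determined by the objective and is independent of $W$, minimizing $\tilde{q}$ over admissible weight matrices is equivalent to minimizing the spectral ratio $\lambda_{n}(W)/\lambda_{m+1}(W)$, which is the claim.

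The only subtlety, and the place where I would be careful, is to emphasize that in part~(i) the minimization is over the eigenvalue ratio of the matrix $WH$ (not of $W$ alone), so the statement does not reduce to a purely spectral property of $W$ when $H$ is known; whereas in part~(ii) the bounds in Proposition~\ref{prop:multistep_restricted} decouple the contribution of $W$ from that of the Hessian, allowing the $u/l$ factor to be pulled out of the optimization. Everything else is an elementary monotonicity check and no deeper obstacle is anticipated.
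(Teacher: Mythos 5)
Your proposal is correct and follows essentially the same route as the paper's proof: rewriting $q^{\star}=1-2/(\sqrt{\overline{\lambda}/\underline{\lambda}}+1)$ and $\tilde{q}=1-2/(\sqrt{\overline{\lambda}_W/\underline{\lambda}_W}+1)$ and invoking monotonicity in the condition-number ratio. Your explicit remark that $u/l$ factors out of $\kappa_W$ in part~(ii) is a slightly more careful articulation of the same observation the paper leaves implicit.
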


The next result shows how the optimal weight selection for both scenarios can be found via convex optimization.

\begin{proposition} \label{prop:weight_optimization}
Let ${\mathcal M}$ be the span of real symmetric matrices with the sparsity pattern induced by ${\mathcal G}$, \emph{i.e.}
\begin{align*}
	{\mathcal M} &= \{ M \in {\mathcal S}^n  \;\vert\;  S_{vw}=0 \mbox{ if } v\neq w \mbox{ and} (v,w)\not\in {\mathcal E}\}.
\end{align*}
Then the problem of minimizing $\overline{\lambda}/\underline{\lambda}$ is equivalent to
\begin{align}
\label{eqn:weight_optimization}
\begin{array}{ll}
\underset{\omega}{\mbox{minimize}}& t\\
\mbox{subject to}& I_{n-m} \leq P^\top H^{1/2} \omega H^{1/2} P \leq t I_{n-m}\\
& H^{1/2} \omega H^{1/2} \in \mathcal{M}, \quad H^{1/2} \omega H^{1/2} V = \textbf{0},
\end{array}
\end{align}
where $V=[v_1, \cdots, v_m] \in \mathbf{R}^{n\times m}$ is the eigenvector space corresponding to the zero eigenvalues of $WH^{1/2}$ and $P=[p_1, p_2 \cdots p_{n-m}]\in \mathbf{R}^{n\times n-m}$ is a matrix of unit vectors spanning $V^\bot$.
\end{proposition}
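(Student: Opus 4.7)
The plan is to exploit the fact that, since $f(x)=\sum_{v}f_v(x_v)$, the Hessian $H=\nabla^2 f(x^{\star})$ is diagonal and positive definite. This allows the spectral problem for the nonsymmetric matrix $WH$ to be symmetrized via the congruence $\tilde{W}:=H^{1/2}WH^{1/2}$. Since $WH=H^{-1/2}\tilde{W}H^{1/2}$, the matrices $WH$ and $\tilde{W}$ are similar and share their spectrum; in particular $\tilde{W}$ is symmetric positive semidefinite and its eigenvalues are $0$ (with multiplicity $m$) together with $\underline{\lambda}\leq\cdots\leq\overline{\lambda}$. Working with $\tilde{W}$ in place of $WH$ is the key reduction.

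Next I would use the homogeneity of the objective $\overline{\lambda}/\underline{\lambda}$. Replacing $W$ by $\gamma W$ for any $\gamma>0$ scales both $\underline{\lambda}$ and $\overline{\lambda}$ by $\gamma$ and preserves every feasibility condition, so without loss of generality one may impose $\underline{\lambda}=1$ and then minimize $\overline{\lambda}=:t$. To translate the spectrum bounds into LMIs, pick $V\in\mathbf{R}^{n\times m}$ with orthonormal columns spanning $\mathrm{null}(\tilde{W})$ and $P\in\mathbf{R}^{n\times(n-m)}$ spanning $V^{\perp}$. In the orthonormal basis $[V\;P]$, $\tilde{W}$ is block diagonal with a zero upper block and a symmetric positive definite lower block $P^{\top}\tilde{W}P$ whose eigenvalues are exactly the nonzero eigenvalues of $\tilde{W}$. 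Hence requiring every nonzero eigenvalue to lie in $[1,t]$ is equivalent to the LMI $I_{n-m}\leq P^{\top}\tilde{W}P\leq tI_{n-m}$.

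It then remains to translate the structural constraints on $W$ into constraints on the variable $\omega$ (which plays the role of $W$). Because $H^{1/2}$ is diagonal with strictly positive entries, $\omega\in\mathcal{M}$ if and only if $H^{1/2}\omega H^{1/2}\in\mathcal{M}$, which encodes the sparsity requirement. The conditions $AW=0$ and $WA^{\top}=0$, together with the count $\dim\mathrm{null}(W)=m$, force $\mathrm{null}(W)=\mathrm{range}(A^{\top})$, equivalently $\mathrm{null}(\tilde{W})=H^{-1/2}\mathrm{range}(A^{\top})=\mathrm{span}(V)$; this is enforced by $H^{1/2}\omega H^{1/2}V=\mathbf{0}$ combined with the LMI lower bound (which guarantees the null space is no larger than $m$-dimensional). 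The resulting program is an SDP in $(\omega,t)$ with affine sparsity and null-space constraints and LMIs, hence convex.

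The main conceptual obstacle is recognizing that a quasi-convex condition-number-type ratio of eigenvalues of a nonsymmetric product can be converted into a genuinely convex SDP by combining two ideas: a congruence symmetrization that moves the spectrum onto the symmetric matrix $\tilde{W}$, and scale-invariance that allows one endpoint of the spectrum to be normalized. Once this insight is in place, verifying that the stated LMIs correctly encode the nonzero spectrum via $P$, and that the sparsity and null-space constraints on $\omega$ are equivalent to the original constraints on $W$, is essentially bookkeeping.
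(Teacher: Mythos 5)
Your argument is correct and follows essentially the same route as the paper's proof: symmetrizing the spectrum of $\omega H$ via the congruence $H^{1/2}\omega H^{1/2}$, encoding the $m$ zero eigenvalues through the kernel condition on $V$, expressing the nonzero spectrum by the LMIs on $P^{\top}H^{1/2}\omega H^{1/2}P$, and using scale invariance (noted in the paper's Remark~\ref{rem:weight_scaling}) to normalize the lower bound. Your write-up is in fact somewhat more explicit than the paper's, but no new ideas are involved.
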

Note that when we only want to minimize the condition number of $W$ subject to the structural constraints, we simply set $H=I$ in the formulation above.

\begin{remark}\label{rem:weight_scaling} The lower bound in~\eqref{eqn:weight_optimization} is rather arbitrary: any scaled matrix $\gamma W$ for $\gamma\in \mathbb{R}_+$ has the same condition number as $W$, and if if $\alpha^{\star}$ and $\beta^{\star}$ are the optimal step-sizes for the matrix $W$, then $\alpha=\alpha^{\star}/\gamma$ and $\beta=\beta^{\star}$ are optimal for $\gamma W$.
\end{remark}

\section{A multi-step dual ascent method}\label{sec:multistep_dual}

An alternative approach for solving (\ref{eqn:problem_formulation}) is to use Lagrange relaxation, \emph{i.e.} to introduce Lagrange multipliers $\mu\in \mathbf{R}^m$ for the equality constraints and solve the dual problem. The dual function associated with (\ref{eqn:problem_formulation}) is then
\begin{align}
	d(\mu) &\triangleq \inf_x\, f(x)+\mu^{\top}(Ax-b) = -f_{\star}(-A^{\top}\mu)-\mu^{T}b \label{eqn:dual_function}
\end{align}
where~$f_{\star}(y) \triangleq \sup_x\; y^{\top}x-f(x)$
is the conjugate function of $f$. The dual problem is to maximize the dual function with respect to $\mu$, \ie,
\begin{align*}
	\begin{array}[c]{ll} \underset{\mu}{\mbox{minimize}} & -d(\mu) = f_{\star}(-A^{\top}\mu)+b^{\top}\mu \end{array}.
\end{align*}
Recall that if $f$ is strongly convex then $f_{\star}$ and hence $-d$ are convex and continuously differentiable~\cite{Van:12}. Hence, in light of our earlier discussion, it is natural to attempt to solve the dual problem using the multi-step iteration
\begin{align}
\label{eqn:dual_accelerated_iterations}
	\mu(k+1)&= \mu(k)+\alpha \nabla d(\mu(k))+\beta( \mu(k)-\mu(k-1)).
\end{align}
In order to find the optimal step-sizes and estimate the convergence factors of the iterations, we need to be able to bound the convexity modulus of $d(\mu)$ and bound the Lipschitz constant of its gradient. The following observation is a first step towards this goal:
\begin{lemma} \label{lem:primal_dual}
Consider the optimization problem (\ref{eqn:problem_formulation}) with associated dual function (\ref{eqn:dual_function}). Let $f$ be a continuously differentiable and closed convex function. Then,
\begin{itemize}
\item[(i)] If $f$ is strongly convex with modulus $l$, then $-\nabla d$ is Lipschitz continuous with constant $\lambda_n(AA^\top)
/l$.
\item[(ii)] If $\nabla f$ is Lipschitz continuous with constant $u$, then $-d$ is strongly convex with modulus $\lambda_1(AA^\top)/u$.
\end{itemize}
\end{lemma}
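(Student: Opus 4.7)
My proposal is to deduce both statements from the classical Fenchel duality between strong convexity and Lipschitz smoothness. Specifically, I would invoke the known fact that for a closed proper convex function $f$, strong convexity of $f$ with modulus $l$ is equivalent to $\nabla f_{\star}$ being well-defined and Lipschitz continuous with constant $1/l$, and dually $\nabla f$ being Lipschitz continuous with constant $u$ is equivalent to $f_{\star}$ being strongly convex with modulus $1/u$. Once this is in hand, everything reduces to tracking how the linear map $-A^{\top}$ transports these constants into the dual variable, since $-d(\mu)=f_{\star}(-A^{\top}\mu)+b^{\top}\mu$ and $\nabla d(\mu)= A\nabla f_{\star}(-A^{\top}\mu) - b$.

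For part (i), the plan is to apply the chain rule to obtain $-\nabla d(\mu_1) + \nabla d(\mu_2) = A\bigl(\nabla f_{\star}(-A^{\top}\mu_2) - \nabla f_{\star}(-A^{\top}\mu_1)\bigr)$, then bound its norm by $\|A\|\cdot (1/l)\cdot \|A^{\top}(\mu_1-\mu_2)\|\leq \|A\|^2 \|\mu_1-\mu_2\|/l$. Since $\|A\|^2 = \lambda_n(AA^{\top})$, this gives the advertised Lipschitz constant $\lambda_n(AA^{\top})/l$.

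For part (ii), I would write the strong convexity inequality for $f_{\star}$ with modulus $1/u$ evaluated at the points $-A^{\top}\mu_1$ and $-A^{\top}\mu_2$, add the linear term $b^{\top}\mu$ to both sides, and simplify using the chain rule so that the result reads
\begin{equation*}
-d(\mu_2)\ \geq\ -d(\mu_1)+\nabla(-d)(\mu_1)^{\top}(\mu_2-\mu_1) + \tfrac{1}{2u}\,\|A^{\top}(\mu_2-\mu_1)\|^2.
\end{equation*}
Then the Rayleigh quotient estimate $\|A^{\top}v\|^2 = v^{\top}AA^{\top}v \geq \lambda_1(AA^{\top})\|v\|^2$ lower-bounds the quadratic term by $\tfrac{\lambda_1(AA^{\top})}{2u}\|\mu_2-\mu_1\|^2$, giving strong convexity of $-d$ with modulus $\lambda_1(AA^{\top})/u$.

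The only subtle point, and the step I would check most carefully, is the invocation of the Fenchel duality theorem in the form I need: strict/strong convexity of $f$ ensures $f_{\star}$ is differentiable everywhere on $\mathbb{R}^n$ with Lipschitz gradient, and conversely Lipschitz smoothness of $\nabla f$ (together with convexity and closedness) yields strong convexity of $f_{\star}$. I would cite the standard references (e.g., Rockafellar, or \cite{Van:12} already cited in the text) rather than reprove this. A minor caveat worth a remark is that when $A$ does not have full row rank, $\lambda_1(AA^{\top})=0$ and part (ii) degenerates to plain convexity of $-d$, which is of course still correct but uninformative; the interesting regime is when the constraints in~(\ref{eqn:problem_formulation}) are linearly independent.
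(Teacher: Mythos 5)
Your proposal is correct and follows essentially the same route as the paper: both invoke the conjugate-duality facts (strong convexity of $f$ with modulus $l$ gives $\nabla f_{\star}$ Lipschitz with constant $1/l$, and $u$-Lipschitz $\nabla f$ gives $1/u$-strong convexity of $f_{\star}$) and then transport these constants through the map $-A^{\top}$ using $\lambda_n(AA^{\top})$ and $\lambda_1(AA^{\top})$, including the same full-row-rank caveat for part (ii). The only cosmetic difference is that you use the norm and function-value forms of these characterizations, whereas the paper works with the equivalent gradient-monotonicity (inner-product) inequalities.
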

These dual bounds can be used to find step-sizes with strong performance guarantees for the dual iterations. Specifically:
\begin{theorem} \label{thm:dual_convergence}
Consider the smoothness bounds stated in Lemma~\ref{lem:primal_dual}. Then, the accelerated dual iterations~\eqref{eqn:dual_accelerated_iterations} converge to $\mu^\star$ at linear rate with the guaranteed convergence factor
\begin{align*}
q^{\star}=\frac{\sqrt{u \lambda_n(AA^\top)}-\sqrt{l\lambda_1(A A^\top)}}{\sqrt{u\lambda_n(A A^\top)}+\sqrt{l\lambda_1(A A^\top)}},
\end{align*}
obtained for step-sizes:
\begin{align*}
\alpha^\star=\left(\frac{2}{\sqrt{u\lambda_n(AA^\top)}+\sqrt{l\lambda_1(AA^\top)}}\right)^2,\quad  \beta^\star=\left(\frac{\sqrt{u \lambda_n(AA^\top)}-\sqrt{l\lambda_1(AA^\top)}}{\sqrt{u\lambda_n(AA^\top)}+\sqrt{l\lambda_1(AA^\top)}}\right)^2.
\end{align*}
\end{theorem}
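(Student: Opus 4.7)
The plan is to apply Theorem~\ref{thm:multi_step} (equivalently, the classical Polyak heavy-ball analysis) to the convex function $-d$, using the smoothness constants provided by Lemma~\ref{lem:primal_dual}. Since the iteration~\eqref{eqn:dual_accelerated_iterations} is gradient ascent on $d$, it is gradient descent on $-d$, so the dual problem reduces to an unconstrained minimization to which the multi-step machinery of Section~\ref{sec:weighted_gradient} applies with weight matrix $W=I$.

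First, by Lemma~\ref{lem:primal_dual}, $-d$ is strongly convex with modulus $m_d \triangleq \lambda_1(AA^\top)/u$ and has Lipschitz-continuous gradient with constant $L_d \triangleq \lambda_n(AA^\top)/l$. Consequently the spectrum of the Hessian of $-d$ (or of its Clarke generalized Hessian when smoothness fails) lies in $[m_d, L_d]$ at every point, and the ``weighted'' Hessian $W \nabla^2(-d) = \nabla^2(-d)$ has $\underline{\lambda}=m_d$ and $\overline{\lambda}=L_d$ in the notation of Theorem~\ref{thm:multi_step}.

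Next, I would invoke Theorem~\ref{thm:multi_step} with these parameters. This immediately yields linear convergence $\|\mu(k+1)-\mu^\star\|\le q^\star\|\mu(k)-\mu^\star\|$ with optimal rate
\begin{equation*}
q^\star=\frac{\sqrt{L_d}-\sqrt{m_d}}{\sqrt{L_d}+\sqrt{m_d}}
\end{equation*}
and the matching step-sizes $\alpha^\star=4/(\sqrt{L_d}+\sqrt{m_d})^2$ and $\beta^\star=(q^\star)^2$. A short algebraic step—multiplying numerator and denominator by $\sqrt{lu}$—converts $\sqrt{L_d}$ into $\sqrt{u\lambda_n(AA^\top)}/\sqrt{lu}$ and $\sqrt{m_d}$ into $\sqrt{l\lambda_1(AA^\top)}/\sqrt{lu}$, yielding exactly the expressions for $q^\star$, $\alpha^\star$, and $\beta^\star$ asserted in the theorem.

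The main subtlety I anticipate is that Theorem~\ref{thm:multi_step} is phrased in terms of the Hessian $H=\nabla^2 f(x^\star)$ evaluated \emph{at the optimum}, whereas Lemma~\ref{lem:primal_dual} only provides \emph{uniform} bounds on the Hessian of $-d$ along the trajectory, and the dual function need not be twice differentiable under the bare hypotheses listed. To close this gap I would invoke Proposition~\ref{prop:multistep_restricted}, which is precisely the variant of Theorem~\ref{thm:multi_step} that operates with uniform spectral bounds rather than exact Hessian information, with $\underline{\lambda}_W=m_d$ and $\overline{\lambda}_W=L_d$. This route sidesteps any regularity issue and delivers the stated linear rate and step-sizes without further work.
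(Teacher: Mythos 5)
Your overall route is exactly the paper's: the printed proof is a one-liner that invokes Lemma~\ref{lem:primal_dual} and Theorem~\ref{thm:multi_step} with $W=I$, using the bound $(\lambda_1(AA^\top)/u)I \leq H \leq (\lambda_n(AA^\top)/l)I$ on the dual Hessian. Your extra appeal to Proposition~\ref{prop:multistep_restricted} to cover the fact that only uniform spectral bounds (rather than the exact Hessian at the optimum) are available is a sensible refinement of the same idea, not a different approach.

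There is, however, one step that does not check out as you state it: the final algebraic conversion for $\alpha^\star$. With $m_d=\lambda_1(AA^\top)/u$ and $L_d=\lambda_n(AA^\top)/l$, multiplying by $\sqrt{lu}$ gives $\sqrt{L_d}+\sqrt{m_d}=\bigl(\sqrt{u\lambda_n(AA^\top)}+\sqrt{l\lambda_1(AA^\top)}\bigr)/\sqrt{lu}$, so your step-size becomes $4/(\sqrt{L_d}+\sqrt{m_d})^2 = 4\,lu/\bigl(\sqrt{u\lambda_n(AA^\top)}+\sqrt{l\lambda_1(AA^\top)}\bigr)^2$, which differs from the $\alpha^\star$ printed in the theorem by the factor $lu$; only the scale-invariant quantities $q^\star$ and $\beta^\star$ come out exactly as claimed. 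To be fair, this factor-of-$lu$ mismatch is present in the paper itself: the printed $\alpha^\star$ does not follow from the paper's own one-line proof either, so your derivation is the internally consistent one. You should simply not assert that the rescaling reproduces the stated $\alpha^\star$ verbatim; the value your argument actually delivers is $\alpha^\star=\bigl(2/(\sqrt{\lambda_n(AA^\top)/l}+\sqrt{\lambda_1(AA^\top)/u})\bigr)^2$, i.e.\ the printed expression multiplied by $lu$.
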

The advantage of Theorem~\ref{thm:dual_convergence} is that it provides step-size parameters with guaranteed convergence factor using readily available data of the primal problem. How close to optimal these results are depends on how tight the bounds in Lemma~\ref{lem:primal_dual} are. If the bounds are tight, then the step-sizes in Theorem~\ref{thm:dual_convergence} are truly optimal. The next example shows that a certain degree of conservatism may be present, even for quadratic programming problems.
\begin{example}
Consider the quadratic minimization problem
\begin{align*}
	\begin{array}[c]{ll}
	\mbox{minimize} & \frac{1}{2}x^{\top}Qx\\
	\mbox{subject to} & Ax=b
	\end{array}
\end{align*}
where $Q \in {\mathcal S}^{n}_+$, nonsingular $A\in \mathbb{R}^{n\times n}$ and $b\in \mathbf{R}^n$.  This implies that the objective function is strongly convex with modulus $\lambda_1(Q)$ and that its gradient is Lipschitz-continuous with constant $\lambda_n(Q)$. Hence, according to Lemma~\ref{lem:primal_dual}, $-d$ is strongly convex with modulus $\lambda_1(AA^{\top})/\lambda_n(Q)$ and its gradient is Lipschitz continuous with constant $\lambda_n(AA^{\top})/\lambda_1(Q)$. However, direct calculations reveal that
\begin{align*}
	d(\mu) &= -\frac{1}{2}\mu^{\top}AQ^{-1}A^{\top}\mu -\mu^{\top}b
\end{align*}
from which we see that $-d$ has convexity modulus $\lambda_1(AQ^{-1}A^{\top})$ and that its gradient is Lipschitz continuous with constant $\lambda_n(AQ^{-1}A^{\top})$. By~\cite[p. 225]{HoJ:85}, these bounds are tighter than those offered by Lemma~\ref{lem:primal_dual}. Specifically, for congruent matrices $Q^{-1}$ and $AQ^{-1}A^{\top}$ there exists nonnegative real numbers $\theta_k$ such that $\lambda_1(AA^{\top})\leq \theta_k\leq \lambda_n(AA^{\top})$ and $\theta_k\lambda_k(Q^{-1})=\lambda_k(AQ^{-1}A^{\top})$. For $k=1$ and $n$ we obtain
\begin{align*}
\frac{\lambda_1(AA^{\top})}{\lambda_n(Q)} &\leq \lambda_1(AQ^{-1}A^{\top}), \quad
\lambda_n(AQ^{-1}A^{\top})\leq \frac{\lambda_n(AA^{\top})}{\lambda_1(Q)}
\end{align*}
For some important classes of problems, the bounds are, however, tight. One such example is the average consensus application considered in Section~\ref{sec:applications}.
\end{example}

\section{Robustness analysis} \label{sec:robustness}
The proposed multi-step methods have significantly improved convergence factors compared to the gradient iterations, and particularly so when the Hessian of the loss function and/or the graph Laplacian of the network is ill-conditioned. However, to design the optimal step-sizes for the multi-step methods one needs to know the upper and lower bounds on the Hessian and the largest and smallest non-zero eigenvalue of the graph Laplacian. These quantities can be hard to estimate accurately in practice. It is therefore important to analyze the sensitivity of the multi-step methods to errors in these parameters to assess if the performance benefits prevail when the step-sizes are tuned based on misestimated parameters. Such a robustness analysis will be performed next.

Let $\uest{\lambda}$ and $\oest{\lambda}$ denote the estimates of $\underline{\lambda}$ and $\overline{\lambda}$ available when tuning the step-sizes. We are interested in quantifying how the convergence properties, and the convergence factors, of the gradient and the multi-step methods are affected when $\uest{\lambda}$ and $\oest{\lambda}$ are used in the step-size formulas that we have derived earlier.  Theorem~\ref{thm:multi_step} provides some useful observations for the multi-step method. The corresponding results for the weighted gradient method are summarized in the following lemma:
\begin{lemma}\label{lem:weighted_gradient_convergence}
Consider the weighted gradient iterations (\ref{eqn:scaled_gradient_iteration}) and let $\overline{\lambda}$ and $\underline{\lambda}$ denote the largest and smallest non-zero eigenvalue of $WH$, respectively. Then,  for fixed step-size $0<\alpha<2/\overline{\lambda}$ (\ref{eqn:scaled_gradient_iteration}) converges to  $x^{\star}$ at linear rate with convergence factor
\begin{align*}
	q_G &= \max\left\{ \vert 1-\alpha \underline{\lambda}\vert, \; \vert 1-\alpha \overline{\lambda}\vert \right\}
\end{align*}
The minimal value $q_G^{\star}=(\overline{\lambda}-\underline{\lambda})/(\overline{\lambda}+\underline{\lambda})$ is obtained for the step-size $\alpha=2/(\overline{\lambda}+\underline{\lambda})$.
\end{lemma}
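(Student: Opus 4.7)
The plan is to recognize Lemma~\ref{lem:weighted_gradient_convergence} as the specialization of Theorem~\ref{thm:multi_step} to $\beta=0$, and to adapt its proof. Substituting $\beta=0$ in the theorem's expression $q=\max\{\sqrt{\beta},\,|1+\beta-\alpha\underline{\lambda}|-\sqrt{\beta},\,|1+\beta-\alpha\overline{\lambda}|-\sqrt{\beta}\}$ immediately reproduces $q_G=\max\{|1-\alpha\underline{\lambda}|,\,|1-\alpha\overline{\lambda}|\}$. The slightly sharper step-size window $\alpha<2/\overline{\lambda}$, as opposed to the $\alpha<2/(u\lambda_n(W))$ obtained by naive substitution into the hypotheses of Theorem~\ref{thm:multi_step}, follows from working directly with the eigenvalues of $WH$ rather than the conservative bound $uI\succeq H$.

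A direct argument proceeds as follows. First I would verify that $x^{\star}$ is a fixed point: by (\ref{eqn:fixed_point_feasibility_cond}) we have $\nabla f(x^{\star})=A^{\top}\mu^{\star}$, and since $WA^{\top}=0$ this gives $W\nabla f(x^{\star})=0$. Setting $e(k):=x(k)-x^{\star}$ and subtracting from (\ref{eqn:scaled_gradient_iteration}),
\[
e(k+1) \;=\; e(k)-\alpha W\bigl[\nabla f(x(k))-\nabla f(x^{\star})\bigr] \;=\; (I-\alpha W H_k)\,e(k),
\]
where $H_k:=\int_0^1 \nabla^2 f(x^{\star}+se(k))\,ds$ satisfies $lI\preceq H_k\preceq uI$ by Assumption~\ref{ass:objective_bounds} and $H_k\to H=\nabla^2 f(x^{\star})$ as $e(k)\to 0$.

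Next I would analyse the spectrum of $I-\alpha WH_k$. Although $WH_k$ is not symmetric, it is similar to $H_k^{1/2}WH_k^{1/2}$, which is symmetric positive semi-definite, so its eigenvalues are real and non-negative. The zero eigenspace of $WH_k$ lies in $\mathrm{range}(A^{\top})$ (since $WA^{\top}=0$), and the remaining eigenvalues are confined to an interval that limits to $[\underline{\lambda},\overline{\lambda}]$. Because $Ae(0)=0$ and the iteration preserves feasibility, $e(k)$ stays in the orthogonal complement of the zero eigenspace throughout, so the relevant spectral radius is $\max\{|1-\alpha\underline{\lambda}|,\,|1-\alpha\overline{\lambda}|\}=q_G$, and the condition $q_G<1$ reduces exactly to $0<\alpha<2/\overline{\lambda}$.

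Finally, minimising $q_G(\alpha)$ is a one-dimensional min-max of two affine branches; equating them via $1-\alpha\underline{\lambda}=-(1-\alpha\overline{\lambda})$ gives $\alpha^{\star}=2/(\overline{\lambda}+\underline{\lambda})$ and $q_G^{\star}=(\overline{\lambda}-\underline{\lambda})/(\overline{\lambda}+\underline{\lambda})$. The main obstacle is to turn the linear recursion $e(k+1)=(I-\alpha W H_k)e(k)$ into a genuine global linear contraction whose factor is determined by the spectrum of $WH$ rather than of $WH_k$; this is handled by tracking the similar symmetric matrix $H_k^{1/2}WH_k^{1/2}$ and using the uniform bounds $lI\preceq H_k\preceq uI$ to control its eigenvalues independently of $k$, exactly as in the proof of Theorem~\ref{thm:multi_step}.
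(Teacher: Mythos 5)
Your proposal follows essentially the same route as the paper's proof: both write $\nabla f(x(k))=A^{\top}\mu^{\star}+H(x(k))(x(k)-x^{\star})$ with the integral-form Hessian, obtain the error recursion $x(k+1)-x^{\star}=(I-\alpha W H(x(k)))(x(k)-x^{\star})$ using $W A^{\top}=0$, and then reduce to Polyak-style spectral analysis of $I-\alpha WH$ (via similarity to the symmetric matrix $H^{1/2}WH^{1/2}$ and restriction to the feasible subspace), finishing with the same one-dimensional min-max over $\alpha$ that equates $|1-\alpha\underline{\lambda}|$ and $|1-\alpha\overline{\lambda}|$. The paper simply delegates these last steps to Polyak's Theorem 3, whereas you spell them out; the substance is the same.
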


Combining this lemma with our previous results from Theorem~\ref{thm:multi_step} yields the following observation.
\begin{proposition}\label{prop:robustness_stability_conditions}
Let $\uest{\lambda}$ and $\oest{\lambda}$ be estimates of $\underline{\lambda}$ and $\overline{\lambda}$, respectively, and assume that $0<\uest{\lambda}<\oest{\lambda}$. Then, for all values of $\uest{\lambda}$ and $\oest{\lambda}$ such that $\overline{\lambda} < \oest{\lambda}+\uest{\lambda}$, both the weighted gradient iteration (\ref{eqn:scaled_gradient_iteration}) with step-size
\begin{align}
	\oest{\alpha}=2/(\oest{\lambda}+\uest{\lambda}) \label{eqn:gradient_step_inaccurate}
\end{align}
and the multi-step method variant (\ref{eqn:multistep_scaled_gradient}) with
\begin{align}
	\oest{\alpha}&=\left(
\frac{2}{\sqrt{\oest{\lambda}}+\sqrt{\uest{\lambda}}}
\right)^2, \; \oest{\beta}=\left(\frac{\sqrt{\oest{\lambda}}-\sqrt{\uest{\lambda}}}{\sqrt{\oest{\lambda}}+\sqrt{\uest{\lambda}}} \right)^2 \label{eqn:multistep_step_inaccurate}
\end{align}
converge to the optimizer $x^{\star}$ of (\ref{eqn:problem_formulation}).
\end{proposition}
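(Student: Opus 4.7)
The plan is to treat the two iterations separately. For the weighted gradient iteration, I would simply invoke Lemma~\ref{lem:weighted_gradient_convergence}, which certifies convergence whenever $0<\alpha<2/\overline{\lambda}$. Substituting $\oest{\alpha}=2/(\oest{\lambda}+\uest{\lambda})$, this step-size bound is equivalent to $\overline{\lambda}<\oest{\lambda}+\uest{\lambda}$, which is exactly the hypothesis. So this part is a one-line verification.

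For the multi-step iteration, I would apply Theorem~\ref{thm:multi_step} at the perturbed step-sizes $\oest{\alpha},\oest{\beta}$ in~\eqref{eqn:multistep_step_inaccurate}. First I would verify the admissibility conditions of the theorem: $0<\uest{\lambda}<\oest{\lambda}$ immediately gives $0<\oest{\beta}<1$, and the upper bound required on $\oest{\alpha}$ follows by a short direct computation from the hypothesis. Theorem~\ref{thm:multi_step} then supplies the convergence factor
\[
q \;=\; \max\bigl\{\sqrt{\oest{\beta}},\; |1+\oest{\beta}-\oest{\alpha}\underline{\lambda}|-\sqrt{\oest{\beta}},\; |1+\oest{\beta}-\oest{\alpha}\overline{\lambda}|-\sqrt{\oest{\beta}}\bigr\},
\]
so the task reduces to showing $q<1$.

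The key observation is that, by construction of $\oest{\alpha}$ and $\oest{\beta}$, the affine function $g(\lambda):=1+\oest{\beta}-\oest{\alpha}\lambda$ satisfies $g(\uest{\lambda})=+2\sqrt{\oest{\beta}}$ and $g(\oest{\lambda})=-2\sqrt{\oest{\beta}}$. I would then split into two regimes for $\lambda\in\{\underline{\lambda},\overline{\lambda}\}$: if $g(\lambda)\geq 0$ then $|g(\lambda)|\leq 1+\oest{\beta}$, hence $|g(\lambda)|-\sqrt{\oest{\beta}}\leq 1+\oest{\beta}-\sqrt{\oest{\beta}}<1$ using $\oest{\beta}<\sqrt{\oest{\beta}}$; if $g(\lambda)<0$ then $|g(\lambda)|=\oest{\alpha}\lambda-1-\oest{\beta}$ is increasing in $\lambda$, so the worst case is $\lambda=\overline{\lambda}$ and it suffices to establish $|g(\overline{\lambda})|-\sqrt{\oest{\beta}}<1$, i.e.
\[
\oest{\alpha}\,\overline{\lambda} \;<\; 2+\oest{\beta}+\sqrt{\oest{\beta}}.
\]
Substituting the closed-form expressions $\oest{\alpha}=4/(\sqrt{\oest{\lambda}}+\sqrt{\uest{\lambda}})^2$ and the identities for $1+\oest{\beta}\pm\sqrt{\oest{\beta}}$, this inequality simplifies to $\overline{\lambda}<\oest{\lambda}+\tfrac{1}{2}\uest{\lambda}+\tfrac{1}{2}\sqrt{\oest{\lambda}\uest{\lambda}}$, which is implied by the hypothesis $\overline{\lambda}<\oest{\lambda}+\uest{\lambda}$ since $\sqrt{\oest{\lambda}\uest{\lambda}}\geq\uest{\lambda}$.

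The main obstacle I anticipate is the algebraic reduction in the last step: carrying out the simplification of $\oest{\alpha}\overline{\lambda}<2+\oest{\beta}+\sqrt{\oest{\beta}}$ cleanly, and then recognizing that the sufficient condition which emerges is genuinely weaker than---and thus implied by---the clean hypothesis $\overline{\lambda}<\oest{\lambda}+\uest{\lambda}$. Once this reduction is in hand, everything else is routine: the endpoint case analysis on $g$, the admissibility check, and the weighted gradient part are all immediate.
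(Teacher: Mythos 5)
Your proposal is correct and follows essentially the same route as the paper: both parts reduce to checking the step-size admissibility conditions of Lemma~\ref{lem:weighted_gradient_convergence} (gradient) and Theorem~\ref{thm:multi_step} (multi-step), the latter amounting to $\oest{\alpha}<2(1+\oest{\beta})/\overline{\lambda}$, which after substituting the closed forms is exactly the hypothesis $\overline{\lambda}<\oest{\lambda}+\uest{\lambda}$. Your additional case analysis showing $q<1$ (via $g(\lambda)=1+\oest{\beta}-\oest{\alpha}\lambda$ and the reduction to $\overline{\lambda}<\oest{\lambda}+\tfrac{1}{2}\uest{\lambda}+\tfrac{1}{2}\sqrt{\oest{\lambda}\uest{\lambda}}$) is algebraically sound but redundant, since Theorem~\ref{thm:multi_step} already asserts convergence once the admissibility conditions hold, which is all the paper verifies.
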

In practice, one should expect that $\oest{\lambda}$ is overestimated, in which case both methods converge. However, convergence can be guaranteed for a much wider range of perturbations. Figure~\ref{fig:robust_1} considers perturbations of the form $\uest{\lambda}=\underline{\lambda}+\uest{\varepsilon}$ and $\oest{\lambda}=\overline{\lambda}+\oest{\varepsilon}$. The white area is the locus of perturbations for which convergence is guaranteed, while the dark area represents inadmissible perturbations which render either $\uest{\lambda}$ or $\oest{\lambda}$ negative. Note that both algorithms are robust to a continuous departure from the true values of $\underline{\lambda}$ and $\overline{\lambda}$, since there is a ball with radius $\sqrt{3}\underline{\lambda}/2$  around the true values for which the methods are guaranteed to converge.
\begin{figure}[tb]
    \begin{center}
     \subfigure[Stability regions]{
     \label{fig:robustness_1_subfig1}
    \includegraphics[width=0.46 \columnwidth]{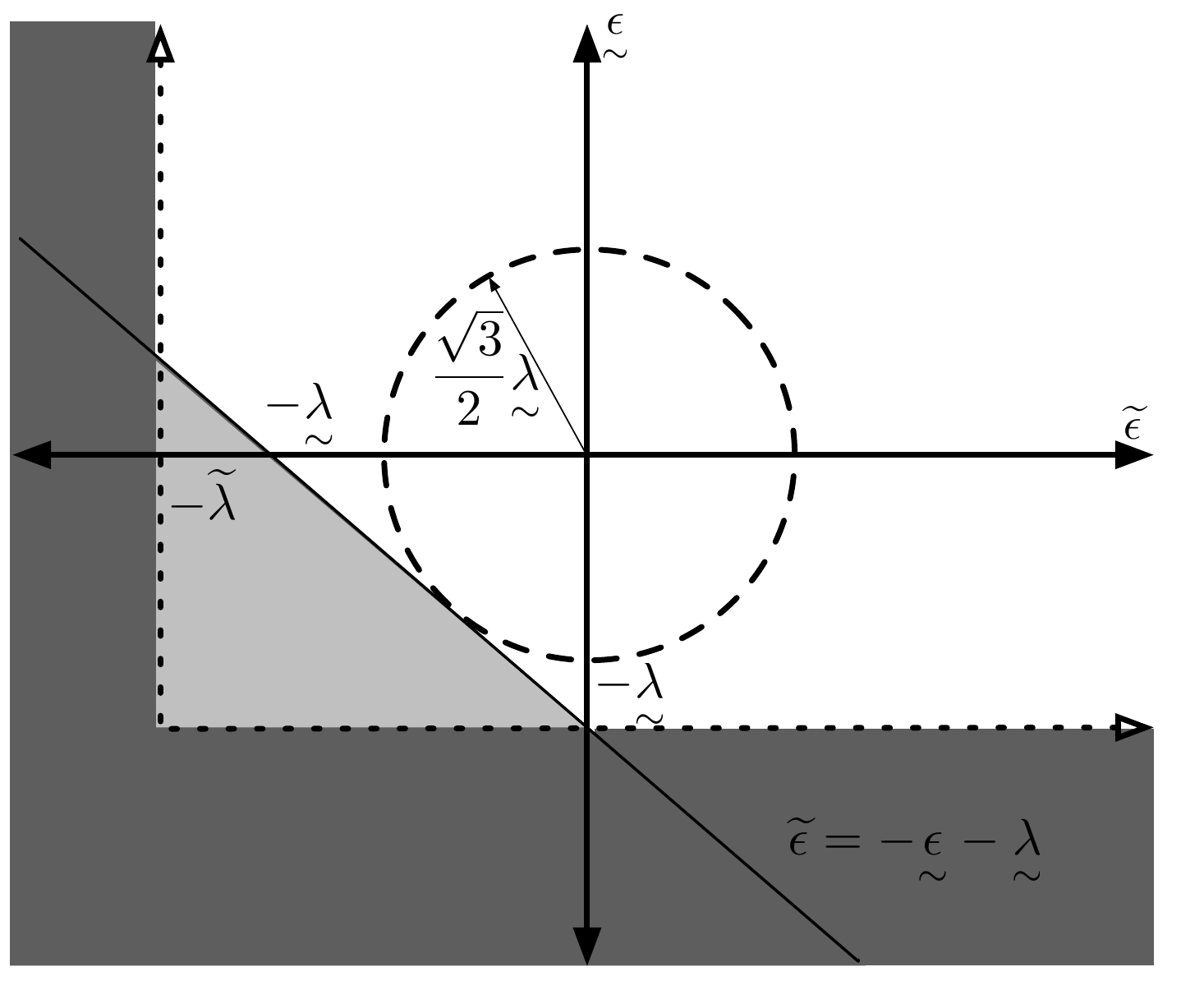}}
      \subfigure[Different perturbation regions]{
      \label{fig:robustness_1_subfig2}
    \includegraphics[width=0.46 \columnwidth]{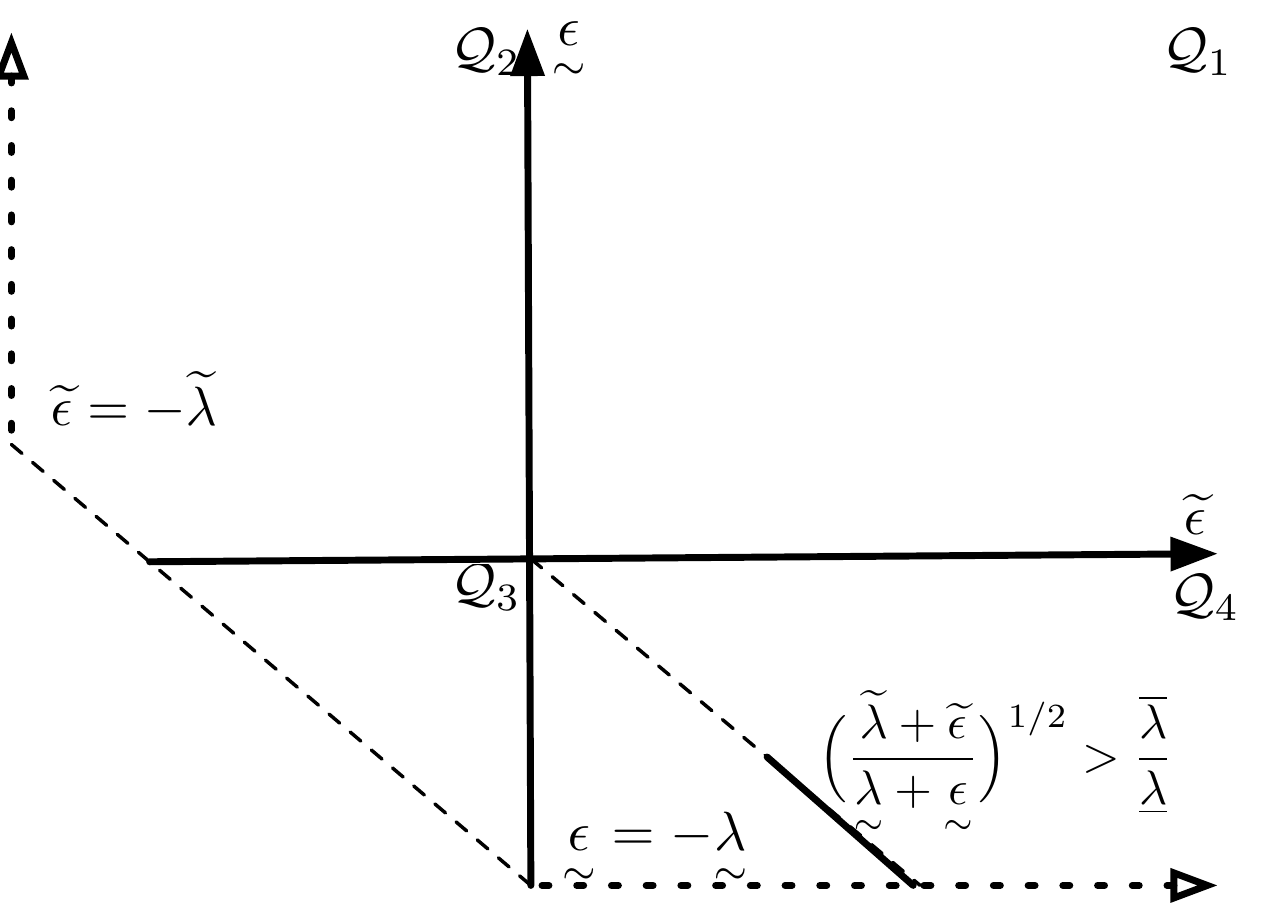}}
    \end{center}
    \vspace{-2mm}
  \caption{Perturbations in the white and gray area correspond to the stable and unstable regions of multi-step algorithm respectively. (b) Multi-step algorithm outperforms gradient iterations in $(\underset{\widetilde{}}{\varepsilon}, \oest{\varepsilon})\in {\mathcal C}\backslash {\mathcal Q}_4$. For symmetric errors in ${\mathcal Q}_4$ (along the line $\oest{\varepsilon}=-\underset{\widetilde{}}{\varepsilon}$) gradient might outperform multi-step algorithm. This condition is depicted in the plot as a solid line.}
    \label{fig:robust_1}
\end{figure}

Next, we proceed to compare the convergence \emph{factors} of the two methods when the step-sizes are tuned based on inaccurate parameters. The following Lemma is then useful.
\begin{lemma} \label{lem:conv_factors_inaccurate}
Let $\uest{\lambda}$ and $\oest{\lambda}$ satisfy $0<\overline{\lambda}<\uest{\lambda}+\oest{\lambda}$.  The convergence factor of the weighted gradient method (\ref{eqn:scaled_gradient_iteration}) with step-size (\ref{eqn:gradient_step_inaccurate}) is given by
\begin{align}\label{eqn:gradient_inaccurate_factor}
	\oest{q}_G &= \begin{cases}
	2\overline{\lambda}/(\uest{\lambda}+\oest{\lambda})-1 & \mbox{ if } \uest{\lambda}+\oest{\lambda} \leq \underline{\lambda}+\overline{\lambda}\\
	1-2\underline{\lambda}/(\uest{\lambda}+\oest{\lambda}) & \mbox{otherwise}, 
	\end{cases}
\end{align}
while the multi-step weighted gradient method (\ref{eqn:multistep_scaled_gradient}) with step-sizes (\ref{eqn:multistep_step_inaccurate}) has convergence factor
\begin{align}\label{eqn:multistep_inaccurate_factor}
	\oest{q} &= \max \left\{ \sqrt{\widetilde{\beta}}, \vert 1+\oest{\beta}-\oest{\alpha}\underline{\lambda}\vert-\sqrt{\widetilde{\beta}}, \vert 1+\oest{\beta}-\oest{\alpha}\overline{\lambda}\vert-\sqrt{\oest{\beta}}\right\}
\end{align}
\end{lemma}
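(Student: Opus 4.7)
The plan is to derive both convergence factor expressions by direct substitution into the convergence factor formulas already established earlier in the paper, simplifying the resulting absolute values through a short case analysis.

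For the weighted gradient iteration, I invoke Lemma~\ref{lem:weighted_gradient_convergence}, which states that for any admissible step-size $\alpha$ the convergence factor equals $\max\{|1-\alpha\underline{\lambda}|,\,|1-\alpha\overline{\lambda}|\}$. The assumption $0<\overline{\lambda}<\uest{\lambda}+\oest{\lambda}$ guarantees that $\oest{\alpha}=2/(\uest{\lambda}+\oest{\lambda})$ satisfies $\oest{\alpha}<2/\overline{\lambda}$, so the lemma applies. Observing that the piecewise linear map $\lambda\mapsto |1-\oest{\alpha}\lambda|$ is V-shaped with vertex at $1/\oest{\alpha}=(\uest{\lambda}+\oest{\lambda})/2$, the maximum over the two endpoints $\underline{\lambda}$ and $\overline{\lambda}$ is attained by whichever endpoint is farther from this vertex. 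Comparing the midpoint $(\underline{\lambda}+\overline{\lambda})/2$ of the true spectrum to $1/\oest{\alpha}$ produces the dichotomy in (\ref{eqn:gradient_inaccurate_factor}). In the first branch one removes the absolute value using $\uest{\lambda}+\oest{\lambda}\leq 2\overline{\lambda}$, which ensures $\oest{\alpha}\overline{\lambda}\geq 1$; in the second branch one uses $\uest{\lambda}+\oest{\lambda}>\underline{\lambda}+\overline{\lambda}\geq 2\underline{\lambda}$ to obtain $\oest{\alpha}\underline{\lambda}<1$.

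For the multi-step method, Theorem~\ref{thm:multi_step} already provides the convergence factor in the form $\max\{\sqrt{\beta},\,|1+\beta-\alpha\underline{\lambda}|-\sqrt{\beta},\,|1+\beta-\alpha\overline{\lambda}|-\sqrt{\beta}\}$ as an explicit function of $(\alpha,\beta)$, so expression (\ref{eqn:multistep_inaccurate_factor}) is obtained by direct substitution of $\alpha=\oest{\alpha}$ and $\beta=\oest{\beta}$. The only thing to verify is that $(\oest{\alpha},\oest{\beta})$ lies in the admissibility region of the theorem. The constraint $0\leq \oest{\beta}\leq 1$ is automatic from its definition, while the step-size bound on $\oest{\alpha}$ is precisely the stability condition established in Proposition~\ref{prop:robustness_stability_conditions}.

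The main obstacle is not any deep analysis but careful sign bookkeeping in the gradient branch: one must track which endpoint of $[\underline{\lambda},\overline{\lambda}]$ dominates and confirm that the corresponding absolute value resolves without a sign flip, which requires combining the case-defining inequality with the stability bound $\overline{\lambda}<\uest{\lambda}+\oest{\lambda}$. The multi-step formula needs no further simplification because its inner absolute values cannot in general be resolved once $(\oest{\alpha},\oest{\beta})$ deviates arbitrarily from the optimal setting, so the expression is reported in its raw form.
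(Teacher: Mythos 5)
Your proposal is correct and follows essentially the same route as the paper: the gradient part invokes Lemma~\ref{lem:weighted_gradient_convergence} and resolves the maximum by the case split $\uest{\lambda}+\oest{\lambda}\lessgtr\underline{\lambda}+\overline{\lambda}$ (your V-shaped/vertex argument is just a geometric repackaging of the paper's algebraic comparison of $\oest{\alpha}\overline{\lambda}-1$ and $1-\oest{\alpha}\underline{\lambda}$), and the multi-step part is, as in the paper, a direct substitution of $(\oest{\alpha},\oest{\beta})$ into the convergence-factor expression of Theorem~\ref{thm:multi_step}, with admissibility supplied by Proposition~\ref{prop:robustness_stability_conditions}.
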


The convergence factor expressions derived in Lemma~\ref{lem:conv_factors_inaccurate} allow us to come to the following conclusions:
\begin{proposition} \label{prop:conv_factors_inaccurate}
Let $\uest{\lambda}=\underline{\lambda}+\uest{\varepsilon}$, $\oest{\lambda}=\overline{\lambda}+\oest{\varepsilon}$ and define the set of perturbation under which the methods converge
\begin{align*}
{\mathcal C} &= \{ (\uest{\varepsilon}, \oest{\varepsilon})\;\vert\; \uest{\varepsilon}\geq -\underline{\lambda}, \, \oest{\varepsilon}\geq -\overline{\lambda},\, \uest{\varepsilon}+\oest{\varepsilon}\geq -\underline{\lambda}\}
\end{align*}
and  the fourth quadrant in the perturbation space ${\mathcal Q}_4 = \{ (\uest{\varepsilon}, \oest{\varepsilon}) \;\vert\; \uest{\varepsilon} <0 \,\cap\, \oest{\varepsilon}>0 \}$. Then, for all $(\uest{\varepsilon}, \oest{\varepsilon})\in {\mathcal C}\backslash {\mathcal Q}_4$, it holds that $\oest{q}\leq \oest{q}_G$.
However, there exists $(\uest{\varepsilon}, \oest{\varepsilon})\in {\mathcal Q}_4$ for which the scaled gradient has a smaller convergence factor than the multi-step variant. In particular, for
\begin{align}
\label{eqn:robustness_gradient_suprier}
	(\uest{\varepsilon}, \oest{\varepsilon})\in {\mathcal Q}_4 \,\,\mbox{and}\,\,(\overline{\lambda}+\oest{\varepsilon})/(\underline{\lambda}+\uest{\varepsilon}) &\geq  (\overline{\lambda}/\underline{\lambda})^2
\end{align}
the multi-step iterations (\ref{eqn:multistep_scaled_gradient}) converge slower than (\ref{eqn:scaled_gradient_iteration}).
\end{proposition}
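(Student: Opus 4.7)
The plan is to establish both parts by working with the explicit convergence-factor formulas (\ref{eqn:gradient_inaccurate_factor}) and (\ref{eqn:multistep_inaccurate_factor}) of Lemma~\ref{lem:conv_factors_inaccurate}, and performing a case analysis according to the sign pattern of the perturbation $(\uest{\varepsilon},\oest{\varepsilon})$. The admissible perturbation set ${\mathcal C}\backslash{\mathcal Q}_4$ decomposes naturally into three sub-regions: the nonnegative quadrant ($\uest{\varepsilon},\oest{\varepsilon}\geq 0$), the nonpositive quadrant intersected with ${\mathcal C}$ ($\uest{\varepsilon},\oest{\varepsilon}\leq 0$ with $\uest{\varepsilon}+\oest{\varepsilon}\geq -\underline{\lambda}$), and the remaining quadrant ($\uest{\varepsilon}\geq 0$, $\oest{\varepsilon}\leq 0$).

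For Part~1, in each sub-region I would show that each of the three terms in the max defining $\oest{q}$ is bounded above by $\oest{q}_G$. The key algebraic step is to substitute the closed-form expressions for $\oest{\alpha}$ and $\oest{\beta}$ from (\ref{eqn:multistep_step_inaccurate}) and use the identities $1+\oest{\beta}=2(\uest{\lambda}+\oest{\lambda})/(\sqrt{\uest{\lambda}}+\sqrt{\oest{\lambda}})^2$ and $\oest{\alpha}\lambda=4\lambda/(\sqrt{\uest{\lambda}}+\sqrt{\oest{\lambda}})^2$ (for $\lambda\in\{\underline{\lambda},\overline{\lambda}\}$), which reduce each factor in the max to a rational function in $\sqrt{\uest{\lambda}}$, $\sqrt{\oest{\lambda}}$, $\sqrt{\underline{\lambda}}$, $\sqrt{\overline{\lambda}}$. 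The required inequalities then follow from monotonicity of $x\mapsto (x-y)/(x+y)$ and the elementary bound $|\sqrt{a}-\sqrt{b}|/(\sqrt{a}+\sqrt{b})\leq |a-b|/(a+b)$. The reason ${\mathcal Q}_4$ must be excluded is that only there is the momentum coefficient $\oest{\beta}$ inflated (the estimated condition number $\oest{\lambda}/\uest{\lambda}$ exceeds the true one) while the gradient step-size remains moderate, so that the multi-step iteration overshoots.

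For Part~2, the argument reduces to a single scalar inequality via the universal lower bound $\oest{q}\geq \sqrt{\oest{\beta}}$. Setting $r=\sqrt{\oest{\lambda}/\uest{\lambda}}$ gives $\sqrt{\oest{\beta}}=(r-1)/(r+1)$, which is monotone increasing in $r$; the hypothesis (\ref{eqn:robustness_gradient_suprier}) yields $r\geq \overline{\lambda}/\underline{\lambda}$, hence $\sqrt{\oest{\beta}}\geq q_G^{\star}$. Within ${\mathcal Q}_4$ the sum $\uest{\lambda}+\oest{\lambda}$ is driven by the inflated $\oest{\lambda}$, so the relevant branch of (\ref{eqn:gradient_inaccurate_factor}) is $\oest{q}_G=1-2\underline{\lambda}/(\uest{\lambda}+\oest{\lambda})$; a direct comparison with $\sqrt{\oest{\beta}}$, aided by the hypothesis on $\oest{\lambda}/\uest{\lambda}$ and the upper bound $\uest{\lambda}\leq \underline{\lambda}$, then delivers $\sqrt{\oest{\beta}}\geq \oest{q}_G$, and therefore $\oest{q}\geq \oest{q}_G$.

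The principal obstacle is the bookkeeping in Part~1: each of the three terms in $\oest{q}$ has its own sign case depending on whether $\oest{\alpha}\underline{\lambda}$ and $\oest{\alpha}\overline{\lambda}$ exceed $1+\oest{\beta}$, and these interact with the two branches of $\oest{q}_G$ in (\ref{eqn:gradient_inaccurate_factor}); a systematic enumeration is required but each sub-case is elementary. Part~2 is comparatively straightforward once the universal lower bound $\oest{q}\geq \sqrt{\oest{\beta}}$ is invoked, reducing the claim to a chain of monotone scalar comparisons.
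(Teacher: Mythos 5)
Your overall skeleton (a quadrant-by-quadrant comparison built on the explicit formulas of Lemma~\ref{lem:conv_factors_inaccurate}) is the same as the paper's, but both halves of your plan have problems, and the one in Part~2 is fatal as stated. Your final step claims that, under $(\uest{\varepsilon},\oest{\varepsilon})\in{\mathcal Q}_4$ and \eqref{eqn:robustness_gradient_suprier}, one can show $\sqrt{\oest{\beta}}\geq \oest{q}_G$ with $\oest{q}_G=1-2\underline{\lambda}/(\uest{\lambda}+\oest{\lambda})$. This inequality is false in general: take $\underline{\lambda}=1$, $\overline{\lambda}=2$, $\uest{\lambda}=0.99$, $\oest{\lambda}=100$ (so $\uest{\varepsilon}=-0.01<0$, $\oest{\varepsilon}=98>0$ and $\oest{\lambda}/\uest{\lambda}\approx 101\geq(\overline{\lambda}/\underline{\lambda})^2=4$); then $\sqrt{\oest{\beta}}=(10-\sqrt{0.99})/(10+\sqrt{0.99})\approx 0.82$ while $1-2\underline{\lambda}/(\uest{\lambda}+\oest{\lambda})\approx 0.98$, so the inequality reverses (heuristically, when $\oest{\lambda}$ is grossly overestimated the gradient step-size $\oest{\alpha}$ collapses like $1/\oest{\lambda}$ and $\oest{q}_G$ degrades faster than $\sqrt{\oest{\beta}}$ does). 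The paper's ${\mathcal Q}_4$ argument does something different: it uses $\oest{q}=\sqrt{\oest{\beta}}$ (exact in ${\mathcal Q}_4$, since both true eigenvalues lie in $[\uest{\lambda},\oest{\lambda}]$ and the relevant roots are complex) and compares it against $(\overline{\lambda}-\underline{\lambda})/(\underline{\lambda}+\overline{\lambda})$, showing this comparison flips exactly when $\oest{\lambda}/\uest{\lambda}>(\overline{\lambda}/\underline{\lambda})^2$; that quantity coincides with the perturbed $\oest{q}_G$ only on the line $\uest{\lambda}+\oest{\lambda}=\underline{\lambda}+\overline{\lambda}$ (the symmetric perturbations of Fig.~\ref{fig:robustness_symmetric}). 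So the first half of your Part~2 (the monotonicity bound $\sqrt{\oest{\beta}}\geq(\overline{\lambda}-\underline{\lambda})/(\overline{\lambda}+\underline{\lambda})$ under \eqref{eqn:robustness_gradient_suprier}) is precisely the paper's step and is where you should stop, restricting the comparison with $\oest{q}_G$ to perturbations with $\uest{\lambda}+\oest{\lambda}=\underline{\lambda}+\overline{\lambda}$; attempting the stronger claim for all of ${\mathcal Q}_4$ cannot succeed. Also, your assertion that $1-2\underline{\lambda}/(\uest{\lambda}+\oest{\lambda})$ is ``the relevant branch'' in ${\mathcal Q}_4$ is not justified: both branches of \eqref{eqn:gradient_inaccurate_factor} occur there depending on the sign of $\uest{\varepsilon}+\oest{\varepsilon}$.

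Part~1 has the right structure (it is the paper's ${\mathcal Q}_1$, ${\mathcal Q}_2$, ${\mathcal Q}_3$ analysis), but the claim that the needed bounds ``follow from monotonicity of $x\mapsto(x-y)/(x+y)$ and $|\sqrt{a}-\sqrt{b}|/(\sqrt{a}+\sqrt{b})\leq|a-b|/(a+b)$'' is not substantiated and, as stated, does not suffice. Those two facts compare multi-step and gradient factors evaluated at a \emph{common} pair of eigenvalues, whereas the quantities you must compare mix the estimated step-sizes $(\oest{\alpha},\oest{\beta})$ with the \emph{true} extreme eigenvalues $\underline{\lambda},\overline{\lambda}$. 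In the paper, the ${\mathcal Q}_2$ case splits into three sub-cases according to the sign of $\uest{\lambda}+\oest{\lambda}-(\underline{\lambda}+\overline{\lambda})$, each requiring a specific factorization after clearing denominators, and the decisive negativity in sub-case (a) (and hence in ${\mathcal Q}_3$) rests on the stability condition $\uest{\lambda}+\oest{\lambda}>\overline{\lambda}$ encoded in ${\mathcal C}$ --- an ingredient your two elementary facts never invoke. To make Part~1 rigorous you must, as the paper does, determine in each quadrant which term of the max in \eqref{eqn:multistep_inaccurate_factor} is active and which branch of \eqref{eqn:gradient_inaccurate_factor} applies, and then verify each resulting polynomial inequality explicitly, using membership in ${\mathcal C}$ where needed.
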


\begin{figure}[tb]
    \begin{center}
     \subfigure[Symmetric perturbations in ${\mathcal Q}_4$]{
     \label{fig:robustness_symmetric}
    \includegraphics[width=0.48 \columnwidth]{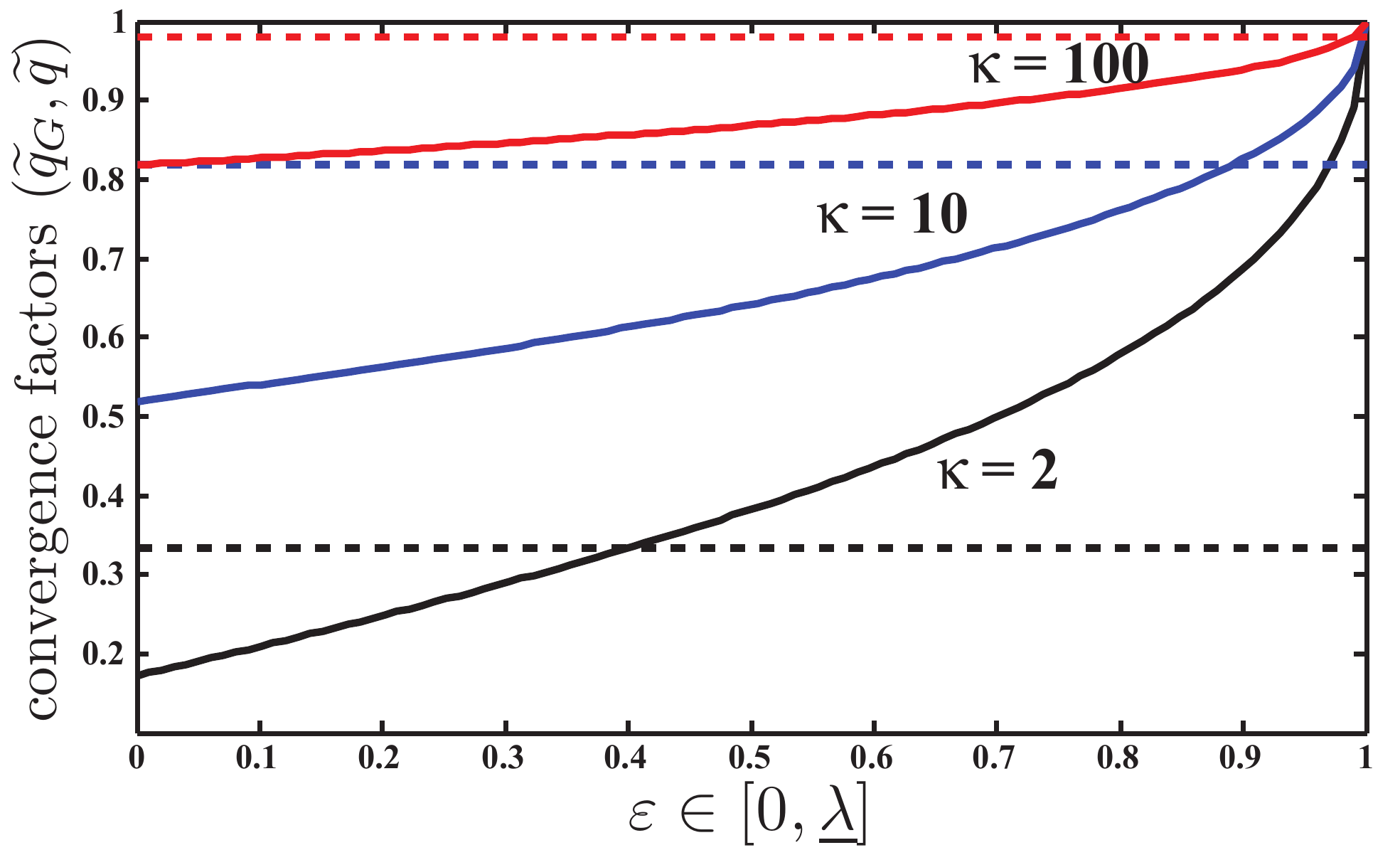}}
      \subfigure[General perturbation in ${\mathcal Q}_4$]{
      \label{fig:robustness_contour}
    \includegraphics[width=0.48 \columnwidth]{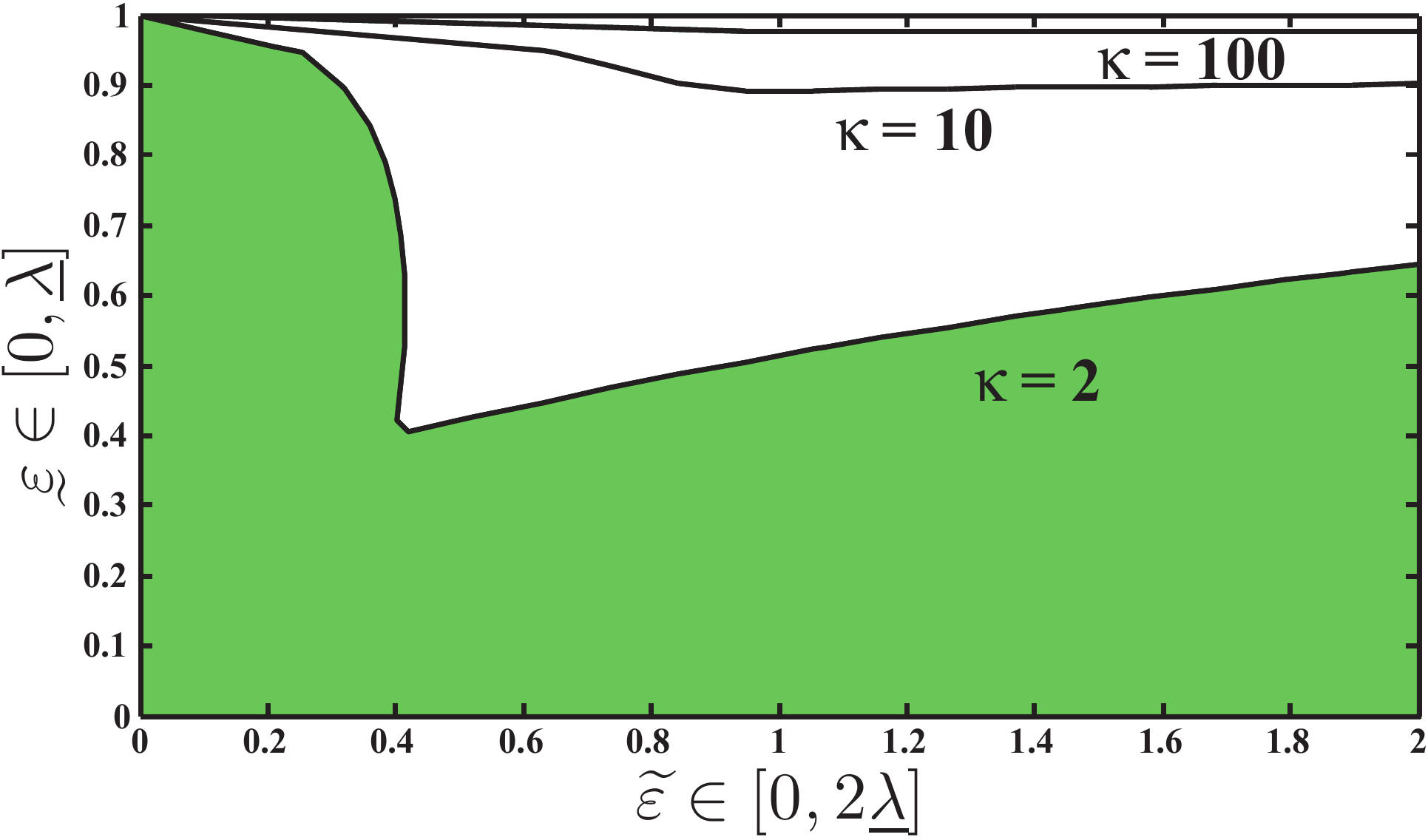}}
    \end{center}
    \vspace{-2mm}
   \caption{(a) Convergence factor of multi-step and gradient algorithms under the condition described by~\eqref{eqn:robustness_gradient_suprier}. Solid lines belong to $\oest{q}$ while the dashed lines depict $\oest{q}_{G}$. (b) Level curves of $\oest{q}-\oest{q}_{G}$ around the origin for~$(\underset{\widetilde{}}{\varepsilon}, \oest{\varepsilon})\in {\mathcal Q}_4$.}
    \label{fig:robust_Q4}
\end{figure}

Fig.~\ref{fig:robustness_1_subfig2} illustrates the different perturbations considered in Proposition~\ref{prop:conv_factors_inaccurate}. While the  multi-step method has superior convergence rate for most perturbations, the troublesome region ${\mathcal Q}_4$ is envisaged to be the most likely one in engineering applications. Because it represents the perturbations where the smallest eigenvalue is underestimated while the largest eigenvalue is overestimated. To shed more light on the convergence properties in this region, we perform a numerical study on a quadratic function with $\underline{\lambda}=1$ and $\overline{\lambda}$ varying from $2$ to $100$. We first consider symmetric perturbations $\uest{\varepsilon}=-\oest{\varepsilon}$, in which case the convergence factor of the gradient method is $\oest{q}_G=  1-2/(1+\overline{\lambda}/\underline{\lambda})$ while the convergence factor of the multi-step method is $\oest{q}=1-2/\sqrt{1+\oest{\lambda}/\uest{\lambda}}$. Fig.~\ref{fig:robustness_symmetric} shows the convergence factors as a function of the perturbation $\varepsilon=\oest{\varepsilon}$. The convergence factor of the gradient iterations is insensitive to this class of perturbations, while the performance of the multi-step iterations degrades with the size of the perturbation, and will eventually become inferior to the gradient. To complement this analysis, we also sweep over $(\uest{\varepsilon}, \oest{\varepsilon})\in {\mathcal C}\cap {\mathcal Q}_4$ and compute the convergence factors for the two methods for problems with different $\overline{\lambda}$. The plot in Fig.~\ref{fig:robustness_contour} indicates that when the condition number $\overline{\lambda}/\underline{\lambda}$ increases, the area where the gradient method is superior (the area above the contour line) is shrinking. It also shows that when $\uest{\lambda}$ tends to zero or $\oest{\lambda}$ is very large, the performance of the multi-step method is severely degraded.

\section{Applications} \label{sec:applications}
In this section, we will apply the developed techniques to three classes of engineering problem for which distributed optimization techniques have received significant attention. These are resource-allocation subject to a network-wide resource-constraint, distributed averaging consensus, and Internet congestion control. In all cases, we will demonstrate that significant speed-ups can be achieved by direct applications of our results, even when compared to acceleration techniques that have been tailor-made to the specific problem class.

\subsection{Accelerated resource allocation}

Our first application is the distributed resource allocation problem under a network-wide resource constraint described in Example~\ref{ex:total_budget_constraint}. This problem class was introduced in~\cite{HSS:80} and revisited by~\cite{XiB:06}, who developed optimal and heuristic weights for the corresponding weighted gradient iteration (\ref{eqn:scaled_gradient_iteration}).
We hence compare the multi-step method developed in this paper with the optimal and suboptimal tuning for the standard weighted gradient iterations proposed in ~\cite{XiB:06}.
Similarly to~\cite{XiB:06} we create problem instances by generating random networks and assigning loss functions on the form $f_v(x_v)=a_v(x_v-c_v)^2+\log[1+\mbox{exp}(x_v-d_v)]$ to nodes. The parameters $a_v, b_v, c_v$ and $d_v$ are drawn uniformly from the intervals $[0,2]$, $[-2,2]$, $[-10,10$ and $[-10, 10]$, respectively. In~\cite{XiB:06} it was shown that the second derivatives of these functions are bounded by $l_v=a_v$ and $u_v=a_v+b_v^2/4$.

\begin{figure}[tb]
 \centering
\includegraphics[angle=0,width=.6\columnwidth]{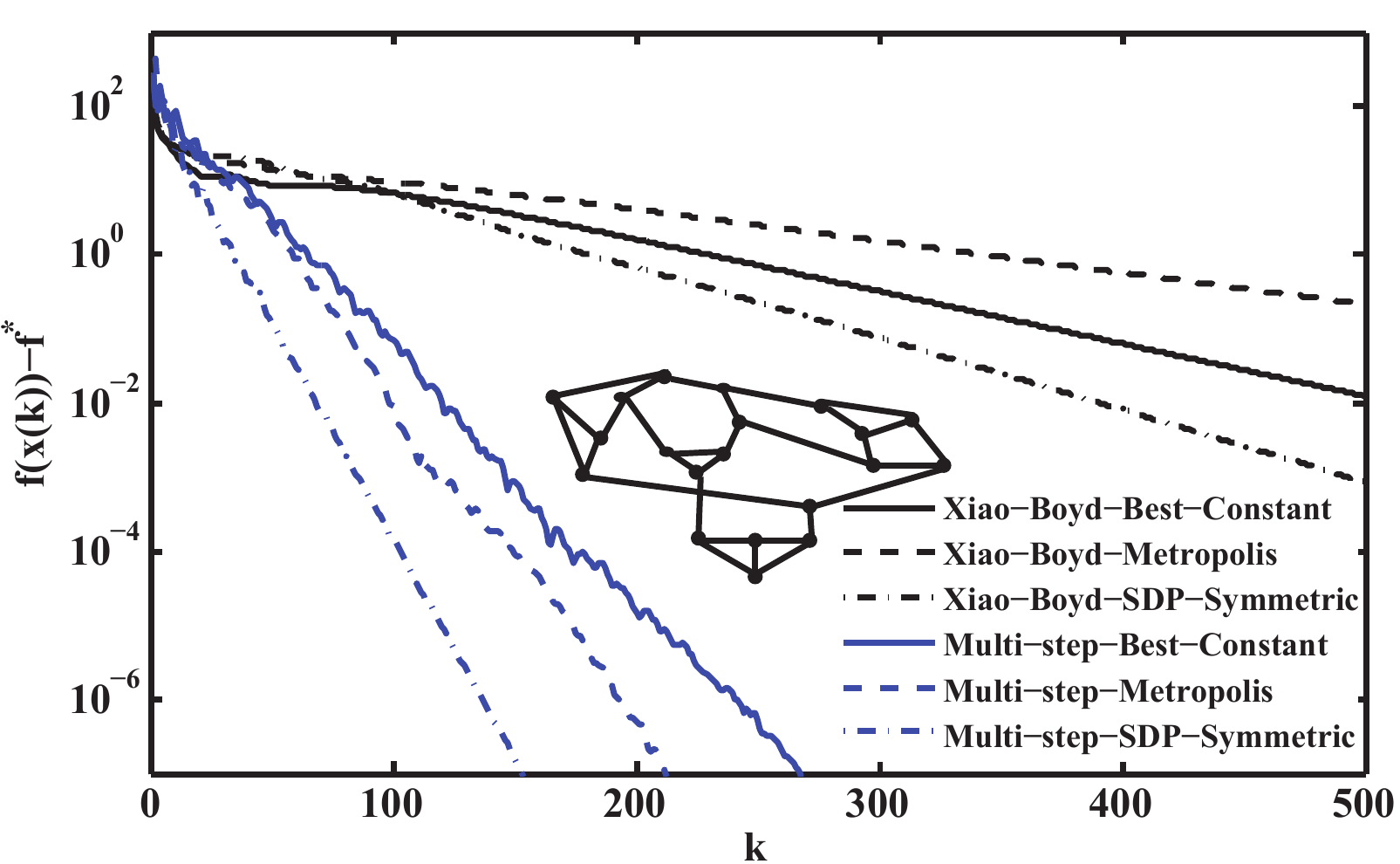}
\caption{ Convergence behavior convergence behavior for weighted and multi-step weighted gradient iterations using randomly generated network and the heuristic weights. plot shows $f\big(x(k)\big)-f^{\star}$ versus  iteration number $k$.}
\label{fig:resorce}
\end{figure}

Fig.~\ref{fig:resorce} shows a representative example of a problem instance along with the convergence behavior for weighted and multi-step weighted gradient iterations for several weight choices. The optimal weights for the weighted gradient method can be found by solving a semi-definite program derived in~\cite{XiB:06}, and by Proposition~\ref{prop:weight_optimization} for the multi-step variant.  In addition, we use the heuristic weights ``best constant'' and ``metropolis'' introduced in~\cite{XiB:06}. In all cases, we observe significantly improved convergence factors for the multi-step method.

In addition to simulations, we compare the analytical expressions for the convergence factors of the weighted gradient and multi-step iterations.
Table~\ref{tab:conv_factors} again demonstrates superior performance of the multi-step method. In addition to the heuristic weights considered previously, we have also used the ``max-degree'' weight heuristic from~\cite{XiB:06}. While this weight setting tends to be worse than ``best constant'' for the scaled gradient iterations, the two methods will always result in the same convergence factors for the multi-step method. This follows from Remark~\ref{rem:weight_scaling} and the fact that both heuristics generate weight matrices on the form $\gamma {\mathcal L}$ where ${\mathcal L}$ is the Laplacian of the underlying graph and $\gamma$ is a positive scalar.
\vspace{-3mm}
\begin{table}[tb]
\renewcommand{\arraystretch}{1}
\caption{resource allocation: Guaranteed convergence factors}
\vspace{-3mm}
\centering
\label{tab:conv_factors}
\begin{tabular}{c c c c c}
\hline
\bfseries Method & \bfseries Max-degree & \bfseries Metropolis & \bfseries Best Constant & \bfseries SDP \\
\hline
Xiao-Boyd &   0.9420 &  0.9318&   0.9133&  0.8952\\
\hline
Multi-step &   0.8667 &   0.8565&   0.8667& 0.7604\\
\hline
\end{tabular}
\vspace{-2mm}
\end{table}
\subsection{Distributed averaging and consensus}

\begin{figure}[tb]
\centering
\includegraphics[angle=0,width=.6\columnwidth]{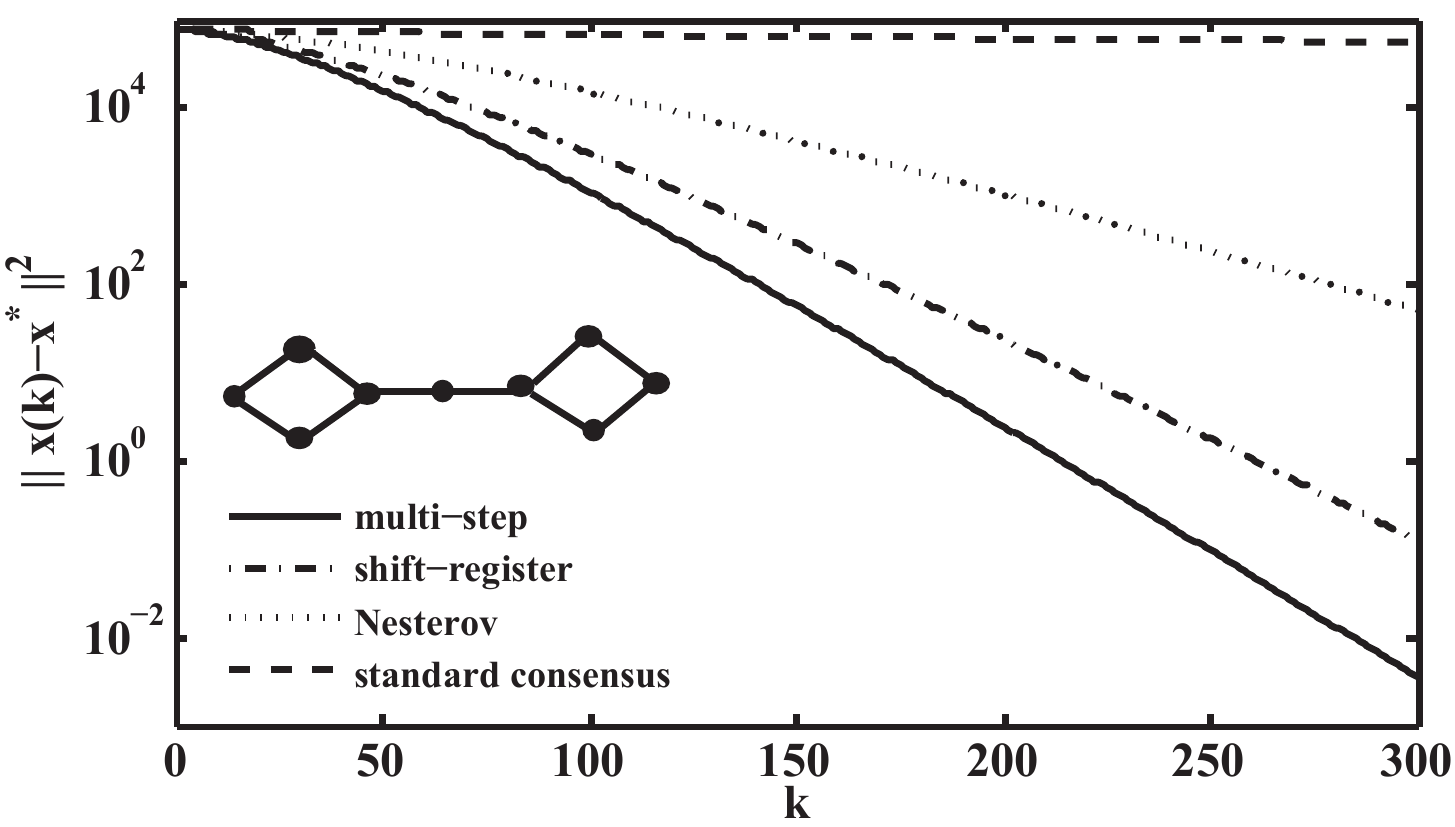}
\caption{ Comparison of standard, multi-step, shift-register, and Nesterov consensus algorithms using metropolis wights. simulation on a dumbbell of $100$ nodes: log scale of objective function  $\|x(k)-x^{\star}\|_2^2$ versus  iteration number k. algorithms start from common initial point $x(0)$.}
\label{fig:consensus}
\end{figure}
Our second application is devoted to distributed averaging.  Distributed algorithms for consensus seeking have been researched intensively for decades, see \emph{e.g.}~\cite{Tsitsiklis86,jadbabaie03,rabbat010}. Here, each node $v$ in the network initially holds a value $c_v$ and coordinates with neighbors in the graph to find the network-wide average. Clearly, this average can be found by applying any distributed optimization technique to the problem
\begin{align}
	\begin{array}[c]{ll}
		\underset{x}{\mbox{minimize}} & \sum_{v\in {\mathcal V}} \frac{1}{2}(x-c_v)^2
	\end{array} \label{eqn:consensus_ls}
\end{align}
since the optimal solution to this problem is the network-wide average of the constants $c_v$. In particular, we will explore how the multi-step technique described in Example~\ref{ex:consensus} with our optimal parameter selection rule compares with the state-of-the art distributed averaging algorithms from the literature.

The basic consensus algorithms use iterations on the form
\begin{align}
	x_v(k+1) &= Q_{vv}x_v(k)+\sum_{w\in {\mathcal N}_v} Q_{vw}x_{w}(k), x\label{eqn:basic_consensus}
\end{align}
where $Q_{vw}$ are scalar weights, and the node states are initialized with $x_v(0)=c_v$. The paper~\cite{XiB:04} provides necessary and sufficient conditions on the weight matrix $Q=[Q_{vw}]$ for the iterations to converge to the network-wide average of the initial values, along with computational procedures for finding $Q$ that minimizes the convergence factor of the iterations.

Following the steps of Example~\ref{ex:consensus}, the optimization approach to consensus would suggest the iterations
\begin{align}
	x(k+1) &= x(k)-\alpha W x(k) \label{eqn:optimization_based_consensus}
\end{align}
with $W=A^{\top}A$ where $A$ is the incidence matrix of ${\mathcal G}$. These iterations are on the same form as (\ref{eqn:basic_consensus}) but use a particular weight matrix. The multi-step counterpart of (\ref{eqn:optimization_based_consensus}) is
\begin{align}
	x(k+1) &= \left( (1+\beta)I-\alpha W \right)x(k)-\beta x(k-1) \label{eqn:optimization_based_consensus_multistep}
\end{align}
In a fair comparison between the multi-step iterations (\ref{eqn:optimization_based_consensus_multistep}) and the basic consensus iterations, the weight matrices of the two approaches should not necessarily be the same, nor necessarily equal to the graph Laplacian. Rather, the weight matrix for the  consensus iterations (\ref{eqn:basic_consensus}) should be optimized using the results from~\cite{XiB:04} and the weigh matrix for the multi-step iteration should be  computed using Proposition~\ref{prop:weight_optimization}.

In addition to the basic consensus iterations with optimal weights, we will also compare our multi-step iterations with two alternative acceleration schemes from the literature. The first one comes from the literature on accelerated consensus and uses shift registers~\cite{Cao06},~\cite{young72},~\cite{anderson09}. Similarly to the multi-step method, these techniques use a history of past iterates, stored in local registers, when computing the next. For the basic consensus iterations (\ref{eqn:basic_consensus}), the shift register yields
\begin{align}
	x(k+1) &= \zeta Q x(k) + (1-\zeta) x(k-1) \label{eqn:shift_register_consensus}
\end{align}
The current approaches to consensus based on shift-registers assume that $Q$ is given and design $\zeta$ to minimize the convergence factor of the iterations. The key results can be traced back to Golub and Varga~\cite{GoV:61} who determined the optimal $\zeta$ and the associated convergence factor to be
\begin{align}\label{eqn:shift_register_optimal_parameters}
	\zeta^{\star} = \frac{2}{1+\sqrt{1-\lambda_{n-1}^2(Q)}},
	\quad
	q^{\star}_{SR} = \sqrt{
	\frac{1-\sqrt{1-\lambda_{n-1}^2(Q)}}
	{1+\sqrt{1-\lambda_{n-1}^2(Q)}}
	}
\end{align}
In our comparisons, the shift-register iterations will use the $Q$-matrix optimized for the basic consensus iterations and the associated $\zeta^{\star}$ given above.  The second accleration technique that we will compare with is the order-optimal gradient methods developed by Nesterov~\cite{Nesterov04}. While these techniques have optimal convergence \emph{rate}, also in the absence of strong convexity, they are not guaranteed to obtain the best convergence factors. For the case of an objective function which is strongly convex with modulus $l$ and whose gradient is Lipschitz continuous with constant $u$, the following iterations are proposed in~\cite{Nesterov04}:
\begin{align*}
	\hat{x}(k+1) &= x(k)-\nabla f(x(k))/u\\
	x(k+1) &= \hat{x}(k+1)+\frac{\sqrt{u}-\sqrt{l}}{\sqrt{u}+\sqrt{l}}(\hat{x}(k+1)-\hat{x}(k))
\end{align*}
initialized with $\hat{x}(0)=x(0)$. When we apply this technique to the consensus problem, we arrive at the iterations
\begin{align}
	x(k+1) &= (I-\alpha W)\left( x(k)+ b( x(k)-x(k-1) )\right) \label{eqn:nesterov_consensus}
\end{align}
with parameters $W=AA^{\top}, a=\lambda_n^{-1}(W)$ and
$b=(\sqrt {\lambda_n(W)}-\sqrt{\lambda_2(W)})/(\sqrt{\lambda_n(W)}+\sqrt{\lambda_2(W)})$.

Fig.~\ref{fig:consensus} compares the multi-step iterations (\ref{eqn:optimization_based_consensus_multistep}) developed in this paper with (a) the basic consensus iterations (\ref{eqn:basic_consensus}) with a weight matrix determined using the metropolis scheme, (b) the shift-register acceleration (\ref{eqn:shift_register_consensus}) with the same weight matrix and the optimal $\zeta$, and (c) the order-optimal method (\ref{eqn:nesterov_consensus}). The particular results shown are for a network of $100$ nodes in a dumbbell topology. The simulations show that all three methods yield a significant improvement in convergence factors over the basic iterations, and that the multi-step method developed in this paper outperforms the alternatives.

Several remarks are in order. First, since the Hessian of (\ref{eqn:consensus_ls}) equals the identity matrix, the speed-up of the multi-step iterations are proportional to $\sqrt{\kappa}=\sqrt{\lambda_n(W)/\lambda_{2}(W)}$. When $W$ equals ${\mathcal L}$, the Laplacian of the underlying graph, we can quantify the speed-ups for certain classes of graphs using spectral graph theory~\cite{Chu:97}. For example, the complete graph has $\lambda_2({
\mathcal L})=\lambda_n({\mathcal L})$ so $\kappa=1$ and there is no real advantage of the multi-step iterations. On the other hand, for a ring network the eigenvalues of ${\mathcal L}$ are given by $1-\cos(2\pi v)/\vert {\mathcal V}\vert$, so $\kappa$ grows quickly with the number of nodes, and the performance improvements of \ref{eqn:optimization_based_consensus_multistep}) over (\ref{eqn:optimization_based_consensus})  could be substantial.

Our second remark pertains to the shift-register iterations. Since these iterations have the same form as (\ref{eqn:optimization_based_consensus_multistep}), we can go beyond the current literature on shift-register consensus (which assumes $Q$ to be given and optimizes $\zeta$) and provide jointly optimal weight matrix and $\zeta$-parameter:

\begin{proposition}\label{prop:shift_register_optimal_parameters}
The weight matrix $Q^\star$ and constant $\zeta^\star$ that minimizes the convergence factor of the shift-register consensus iterations (\ref{eqn:shift_register_consensus}) are
\begin{align*}
	Q^\star &= I-\theta^\star W^{\star}, \quad \zeta^\star=1+\beta^{\star}
\end{align*}
where $W^{\star}$ is computed in Proposition~\ref{prop:weight_optimization}, $\beta^{\star}$ is given in Theorem~\ref{thm:multi_step} with $H=I$ and
\begin{align*}
	\theta^\star &= \frac{2}{\lambda_2(W^\star)+\lambda_n(W^\star)}
\end{align*}
\end{proposition}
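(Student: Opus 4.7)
The plan is to exploit a direct structural equivalence between the shift-register iteration (\ref{eqn:shift_register_consensus}) and the multi-step weighted gradient iteration (\ref{eqn:optimization_based_consensus_multistep}). Matching the coefficients of $x(k)$ and $x(k-1)$ in the two recursions gives the correspondence
$$\zeta Q = (1+\beta)\, I - \alpha W, \qquad 1 - \zeta = -\beta,$$
from which $\zeta = 1+\beta$ and $Q = I - \theta W$ with $\theta = \alpha/(1+\beta)$. Thus every admissible shift-register pair $(Q,\zeta)$ corresponds to a multi-step triple $(\alpha,\beta,W)$ that produces \emph{exactly} the same iterates, and conversely, modulo the harmless rescaling $W\mapsto \gamma W,\, \alpha\mapsto \alpha/\gamma$ from Remark~\ref{rem:weight_scaling}, which leaves both $Q = I - \theta W$ and $\zeta$ invariant.

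Next I would check that the feasibility sets of the two problems correspond under this map. Since $Q - I = -\theta W$, graph-sparsity of $W$ is equivalent to graph-sparsity of the off-diagonal entries of $Q$, and $W\mathbf{1}=\mathbf{0}$ is equivalent to $Q\mathbf{1}=\mathbf{1}$ (the row-stochastic condition needed for convergence to the average). The spectral admissibility conditions likewise correspond via $\lambda_i(Q) = 1 - \theta\lambda_i(W)$. Consequently, the trajectory of (\ref{eqn:shift_register_consensus}) at $(Q,\zeta)$ coincides with the trajectory of (\ref{eqn:optimization_based_consensus_multistep}) at the image $(\alpha,\beta,W)$, and hence the two schemes share the same convergence factor at corresponding parameter choices.

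It follows that jointly minimizing the convergence factor of (\ref{eqn:shift_register_consensus}) over $(Q,\zeta)$ is equivalent to jointly minimizing the convergence factor of (\ref{eqn:optimization_based_consensus_multistep}) over $(\alpha,\beta,W)$. The latter problem has already been solved: since the consensus loss in (\ref{eqn:consensus_ls}) has identity Hessian, Proposition~\ref{prop:multistep_condition_number_minimization} tells us that the optimal weight matrix $W^{\star}$ is the one minimizing $\lambda_n(W)/\lambda_2(W)$ over graph-sparse $W$ (computable via Proposition~\ref{prop:weight_optimization} with $H=I$), and Theorem~\ref{thm:multi_step} gives the optimal step-sizes $\alpha^{\star},\beta^{\star}$ associated with $W^{\star}$. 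Transporting these through the bijection yields at once $\zeta^{\star}=1+\beta^{\star}$ and $Q^{\star}=I-\theta^{\star}W^{\star}$ with $\theta^{\star}=\alpha^{\star}/(1+\beta^{\star})$. A short algebraic simplification using $\alpha^{\star}=4/(\sqrt{\overline{\lambda}}+\sqrt{\underline{\lambda}})^{2}$ and $1+\beta^{\star}=2(\overline{\lambda}+\underline{\lambda})/(\sqrt{\overline{\lambda}}+\sqrt{\underline{\lambda}})^{2}$, with $\overline{\lambda}=\lambda_n(W^{\star})$ and $\underline{\lambda}=\lambda_2(W^{\star})$, collapses the ratio to the advertised $\theta^{\star}=2/(\lambda_2(W^{\star})+\lambda_n(W^{\star}))$.

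The main obstacle, and the place that deserves careful bookkeeping, is the second step: one must verify that the structural and spectral feasibility conditions really correspond under the bijection, and handle the scaling redundancy cleanly so that the ``jointly optimal'' statements on each side refer to the same equivalence class of parameters. Once that identification is in place, the rest is essentially a substitution and a one-line simplification of the closed-form expressions from Theorem~\ref{thm:multi_step}.
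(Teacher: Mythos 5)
Your proposal is correct and follows essentially the same route as the paper: match the coefficients of $x(k)$ and $x(k-1)$ in (\ref{eqn:shift_register_consensus}) and (\ref{eqn:optimization_based_consensus_multistep}) to get $\zeta=1+\beta$ and $Q=I-\bigl(\alpha/(1+\beta)\bigr)W$, then substitute the optimal $\alpha^{\star},\beta^{\star}$ from Theorem~\ref{thm:multi_step} with $H=I$ and $W=W^{\star}$ from Proposition~\ref{prop:weight_optimization}, and simplify the ratio to $2/(\lambda_2(W^{\star})+\lambda_n(W^{\star}))$. Your extra bookkeeping on the correspondence of feasibility sets and the scaling redundancy of Remark~\ref{rem:weight_scaling} is a sound refinement of the same argument, not a different one.
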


\subsection{Internet congestion control}

Our final application is to the area of Internet congestion control, where Network Utility Maximization (NUM) has emerged as powerful framework
for studying various important resource allocation problems, see, \emph{e.g.}, \cite{kelly98,low99,Mikael04,Chaing07}. The vast majority of the work in this area is based on the dual decomposition approach introduced in~\cite{low99}. Here, the optimal bandwidth sharing among $S$ flows in a data network is posed as the optimizer of a convex optimization problem
\begin{align}
	\begin{array}[c]{ll}
	\underset{x}{\mbox{minimize}} & \sum_{s} u_s(x_s)\\
	\mbox{subject to} & x_s\in [m_s, M_s]\\
	& Rx\leq c
	\end{array}	 \label{eqn:prob_num}
\end{align}
In this formulation $x_s$ is the communication rate of flow $s$, and the strictly concave and increasing function $u_s(x_s)$ describes the utility that source $s$ has of communicating at rate $x_s$. The communication rate is restricted to a bounded interval. Finally, $R\in \{0, 1\}^{L\times S}$ is a routing matrix, whose entries $R_{ls}$ equal one if flow $s$ traverses link $l$ and zero otherwise. In this way, $Rx$ is the total traffic on links, which cannot exceed the link capacities $c\in {\mathbb R}^n$. We make the following assumptions.
\begin{assumption}\label{ass:num}
For the problem~(\ref{eqn:prob_num}) it holds that
\begin{itemize}
\item[(i)] Each $u_s(x_s)$ is twice continuously differentiable and satisfies $0<l<-\nabla^2 u_s(x_s)<u$ for $x_s\in [m_s,M_s]$
\item[(ii)] For every link $l$, there exists a source $s$ whose flow only traverses $l$, \emph{i.e.} $R_{ls}=1$ and $R_{l^{\prime}s}=0$ for all $l^{\prime}\neq l$.
\end{itemize}
\end{assumption}
While these assumptions appear restrictive, they are often postulated in the literature (\emph{e.g.}~\cite[Assumptions C1-C4]{low99}). Note that under Assumption~\ref{ass:num}, the routing matrix has full row rank and all link constraints hold with equality at optimum. Hence, we can replace $Rx\leq c$ in (\ref{eqn:prob_num}) with $Rx=c$, introduce Lagrange multipliers $\mu$ for these constraint,  and form the associated dual function
\begin{align*}
	d(\mu) &= \max_{x_{s}\in [m_s, M_s]} \sum_s \left\{ u_s(x_s)-x_s\sum_l R_{ls}\mu_l \right\}+\sum_l \mu_l c_l
\end{align*}
Evaluating $d(\mu)$ amounts to solving an optimization problem in $x$. Since this problem is separable in $x_s$, it can be solved by each source in isolation based on the sum of the Lagrange multipliers for the links that the flow traverses,
\begin{align}
	x_s^{\star}(\mu) &= \underset{z \in [m_s, M_s]}{\arg\max} u_s(z)-z\sum_{l} R_{ls}\mu_l  \label{eqn:num_primal}
\end{align}
Similarly, each Lagrange multiplier update
\begin{align}
	\mu_l(k+1) &= \mu_l(k)+\alpha\left(\sum_{l} R_{ls} x_s^{\star}(\mu(k))-c_l \right) \label{eqn:num_dual}
\end{align}
can be updated by the corresponding link based on local information: if the traffic demand on the link exceeds capacity, the multiplier is increased, otherwise it is decreased. It is possible to show that under the conditions that under Assumption~\ref{ass:num},  the dual function is strongly concave, differentiable and has a Lipschitz-continuous gradient~\cite{low99}. Hence, by standard arguments, the updates (\ref{eqn:num_primal}), (\ref{eqn:num_dual}) converge to a primal-dual optimal point $(x^{\star}, \mu^{\star})$ for appropriately chosen step-size $\alpha$.

Our results from Section~\ref{sec:multistep_dual} indicate that substantially improved convergence factors could be obtained by the following class of multi-step updates of the Lagrange multipliers
\begin{align}
	\mu_l(k+1) &= \mu_l(k)+\alpha\left(\sum_{l} R_{ls} x_s^{\star}(\mu(k))-c_l \right) +\beta (\mu_l(k)-\mu_l(k-1)) \label{eqn:num_dual_multistep}
\end{align}
To tune the step-sizes in an optimal way, we bring the techniques from Section~\ref{sec:multistep_dual} into action. To do so, we first bound the eigenvalues of  $RR^{\top}$ using the following result:
\begin{lemma} \label{lem:eigenvalue_bouds}
Let $R\in\{0,1\}^{L\times S}$ satisfy Assumption~\ref{ass:num}. Then
\begin{align*}
1&\leq \lambda_1(RR^{\top}), \quad \lambda_n(RR^{\top})\leq l_{\max}s_{\max}
\end{align*}
where $l_{\max}=\max_s \sum_l R_{ls}$ and $s_{\max}=\max_l \sum_s R_{ls}$.
\end{lemma}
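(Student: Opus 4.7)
The plan is to prove the two bounds separately, using Assumption~\ref{ass:num}(ii) for the lower bound and a standard matrix-norm inequality for the upper one.

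For the lower bound, I would write $RR^{\top}=\sum_{s=1}^{S} r_s r_s^{\top}$, where $r_s\in\{0,1\}^{L}$ denotes the $s$-th column of $R$. By Assumption~\ref{ass:num}(ii), for each link $l\in\{1,\dots,L\}$ there is a source $s_l$ whose flow traverses only link $l$, which means $R_{l s_l}=1$ and $R_{l' s_l}=0$ for $l'\neq l$; equivalently, $r_{s_l}=e_l$, the $l$-th standard basis vector of $\mathbb{R}^{L}$. Since the terms $r_s r_s^{\top}$ are all positive semidefinite, dropping every column except these $L$ single-link flows yields
\begin{equation*}
RR^{\top} \;\succeq\; \sum_{l=1}^{L} e_l e_l^{\top} \;=\; I_{L}.
\end{equation*}
Consequently every eigenvalue of $RR^{\top}$ is at least $1$, and in particular $\lambda_1(RR^{\top})\geq 1$.

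For the upper bound I would invoke the inequality $\|R\|_{2}^{2}\leq \|R\|_{1}\,\|R\|_{\infty}$ relating the spectral norm to the maximum column-sum norm $\|R\|_{1}$ and maximum row-sum norm $\|R\|_{\infty}$ (a standard consequence of H\"older's inequality, see e.g.\ \cite[p.~365]{HoJ:85}). Because $R$ has $0$/$1$ entries,
\begin{equation*}
\|R\|_{1} \;=\; \max_{s}\sum_{l} R_{ls} \;=\; l_{\max},\qquad
\|R\|_{\infty} \;=\; \max_{l}\sum_{s} R_{ls} \;=\; s_{\max},
\end{equation*}
so $\lambda_n(RR^{\top})=\|R\|_{2}^{2}\leq l_{\max}\,s_{\max}$.

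The main obstacle is simply recognizing that Assumption~\ref{ass:num}(ii) delivers a submatrix equal to $I_L$ inside $RR^{\top}$; once one unpacks the meaning of a ``single-link'' flow this is immediate, and the remaining steps are standard manipulations of the Gram matrix and induced norms. The bound is also visibly tight: in a star topology with only single-link flows we have $R=I_{L}$, and both inequalities hold with equality since $l_{\max}=s_{\max}=1$.
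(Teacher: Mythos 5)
Your proposal is correct and follows essentially the same route as the paper: the lower bound comes from the single-link flows of Assumption~\ref{ass:num}(ii) contributing a copy of $I_L$ (you phrase it as $RR^{\top}\succeq I_L$ via the Gram decomposition, the paper equivalently bounds $\min_{\Vert x\Vert_2=1}\Vert R^{\top}x\Vert_2^2\geq 1$), and the upper bound uses the standard inequality $\Vert\cdot\Vert_2^2\leq\Vert\cdot\Vert_1\Vert\cdot\Vert_\infty$, which you apply directly to $R$ while the paper applies it to the symmetric matrix $RR^{\top}$ and then bounds its row sums. The only differences are these cosmetic choices; both arguments are valid and yield the identical bounds.
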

The optimal step-size parameters and corresponding convergence factor now follow from Lemma~\ref{lem:eigenvalue_bouds} and Theorem~\ref{thm:dual_convergence}:
\begin{proposition}
Consider the network utility maximization problem (\ref{eqn:prob_num}) under Assumption~\ref{ass:num}. Then, for $0\leq \beta<1$ and
$0<\alpha<2(1+\beta)/(u l_{\max}s_{\max})$ the iterations (\ref{eqn:num_primal}) and (\ref{eqn:num_dual_multistep}) converge linearly to a primal-dual optimal pair. The step-sizes
\begin{align*}
	\alpha &= \left( \frac{2}{\sqrt{ul_{\max}s_{\max}}+\sqrt{l}}\right)^2, \; \beta=\left( \frac{\sqrt{u l_{\max} s_{\max}}-\sqrt{l}}{\sqrt{ul_{\max}s_{\max}}+\sqrt{l}}\right)^2
\end{align*}
ensure that the convergence factor of the dual iterates is
\begin{align*}
	q_{\rm NUM} &= \frac{\sqrt{u l_{\max}s_{\max}}-\sqrt{l}}{\sqrt{u l_{\max}s_{\max}}+\sqrt{l}}
\end{align*}
\end{proposition}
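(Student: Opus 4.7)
The plan is to reduce the proposition to a direct composition of three results that have already been established: Lemma~\ref{lem:primal_dual} (primal-to-dual smoothness transfer), Lemma~\ref{lem:eigenvalue_bouds} (eigenvalue bounds on $RR^{\top}$), and Theorem~\ref{thm:dual_convergence} (optimal step-sizes and convergence factor for the accelerated dual iteration). The iterations (\ref{eqn:num_primal})--(\ref{eqn:num_dual_multistep}) are exactly an instance of the accelerated dual ascent scheme (\ref{eqn:dual_accelerated_iterations}), with primal variable $x$, dual variable $\mu$, and the role of the constraint matrix $A$ played by the routing matrix $R$; so what remains is to identify the right smoothness constants for $-d$ and plug them into Theorem~\ref{thm:dual_convergence}.

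First I would verify that the hypotheses of Lemma~\ref{lem:primal_dual} are satisfied. Assumption~\ref{ass:num}(i) says that the primal objective $f(x)=-\sum_s u_s(x_s)$ is strongly convex with modulus $l$ and has an $u$-Lipschitz gradient on the feasible box, and Assumption~\ref{ass:num}(ii) guarantees that $R$ has full row rank, so that $RR^{\top}\succ 0$ and the dual problem is well-posed. Applying Lemma~\ref{lem:primal_dual} then yields that $-d$ is strongly convex with modulus $\lambda_1(RR^{\top})/u$ and has gradient Lipschitz constant $\lambda_n(RR^{\top})/l$. Next I would invoke Lemma~\ref{lem:eigenvalue_bouds} to obtain the uniform bounds $\lambda_1(RR^{\top})\geq 1$ and $\lambda_n(RR^{\top})\leq l_{\max}s_{\max}$, which give the conservative surrogates: convexity modulus at least $1/u$ and Lipschitz constant at most $l_{\max}s_{\max}/l$. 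Finally, I would substitute $\lambda_1(RR^{\top}) \leftarrow 1$ and $\lambda_n(RR^{\top}) \leftarrow l_{\max}s_{\max}$ into the closed-form expressions of Theorem~\ref{thm:dual_convergence}; the step-sizes $\alpha$ and $\beta$ and the convergence factor $q_{\rm NUM}$ fall out immediately, and the stability range $0<\alpha<2(1+\beta)/(ul_{\max}s_{\max})$ comes from the analogue of the range condition in Theorem~\ref{thm:multi_step} applied to the dual with the surrogate Lipschitz constant $ul_{\max}s_{\max}$ in the transformed parametrization used in Theorem~\ref{thm:dual_convergence}.

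The main subtlety, and the only place where the proof requires care rather than mechanical substitution, is justifying that using surrogate constants larger (respectively smaller) than the true dual Lipschitz and convexity parameters still preserves linear convergence with the claimed factor. This is exactly the dual analogue of Proposition~\ref{prop:multistep_restricted}: because the map $(\bar\lambda,\underline\lambda)\mapsto(\sqrt{\bar\lambda}-\sqrt{\underline\lambda})/(\sqrt{\bar\lambda}+\sqrt{\underline\lambda})$ is monotone increasing in $\bar\lambda$ and decreasing in $\underline\lambda$, inflating $\lambda_n(RR^{\top})$ up to $l_{\max}s_{\max}$ and deflating $\lambda_1(RR^{\top})$ down to $1$ only degrades the guaranteed rate, and the tuning produced by Theorem~\ref{thm:dual_convergence} under those surrogates remains within the stability range and delivers at worst the stated factor $q_{\rm NUM}$. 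The robustness machinery of Section~\ref{sec:robustness}, instantiated in the ``overestimate the Lipschitz, underestimate the modulus'' regime, covers precisely this step.
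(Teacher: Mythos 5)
Your proposal is correct and follows essentially the same route as the paper, which justifies this proposition in one line by combining Lemma~\ref{lem:primal_dual}, Lemma~\ref{lem:eigenvalue_bouds}, and Theorem~\ref{thm:dual_convergence} (with $A=R$). Your only addition is to spell out explicitly why substituting the conservative surrogates $\lambda_1(RR^{\top})\geq 1$ and $\lambda_n(RR^{\top})\leq l_{\max}s_{\max}$ still yields the guaranteed factor $q_{\rm NUM}$ -- the monotonicity/robustness argument analogous to Proposition~\ref{prop:multistep_restricted} -- which the paper leaves implicit.
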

\begin{figure}[tb]
\centering
\includegraphics[angle=0, width=.6\columnwidth]{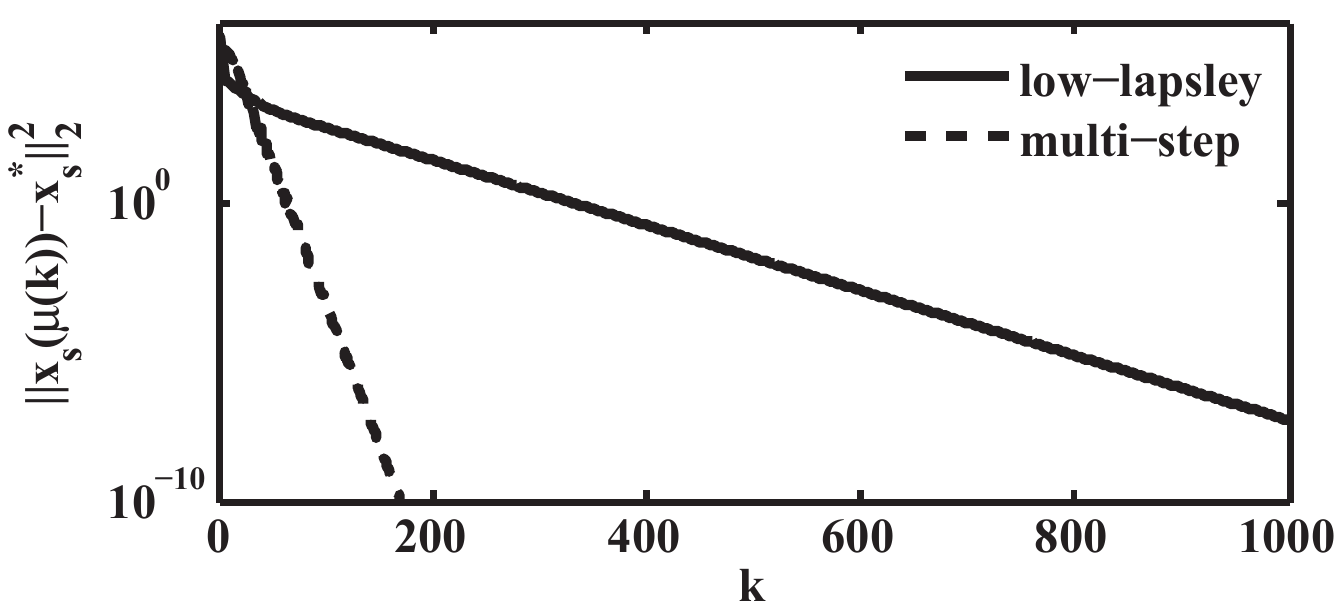}
\caption{ Convergence of Low-Lapsley versus multi-step formulations. Plot shows log scale of the euclidian distance from optimal source rates $\|x_s(\mu(k))-x_s^{\star}\|_2^2$ versus the iteration number $k$.}
\label{fig:num_acceleration}
\end{figure}

Note that an upper bound of the Hessian of the dual function was also derived in~\cite{low99}. However,  strong concavity was not explored and the associated bounds were not derived.

We now comment on the steady behavior of accelerated link price algorithm~\eqref{eqn:num_dual_multistep}. Due to the saturation assumption as $k\rightarrow \infty$, close to the equilibrium, we have $ \alpha\left(\sum_{l} R_{ls} x_s^{\star}(\mu(k))-c_l \right)  \rightarrow 0$. 

\begin{equation}
\label{eq:NUM_PD_controller}
\begin{array}{ll}
&\mu_l(k+1)= \mu_l(k)  +\beta \big( \mu_l(k) - \mu_l(k-1) \big)\\
&\mu_l(k+1)-\mu_l^\star= \mu_l(k) -\mu_l^\star +\beta \Big( \big( \mu_l(k) -\mu_l^\star \big)- \big( \mu_l(k-1)-\mu_l^\star \big) \Big)\\
&e^\mu_l(k+1)= e^\mu_l(k)  +\beta \big( e^\mu_l(k) - e^\mu_l(k-1) \big) ,
\end{array}
\end{equation}
where $\mu_l^\star$ is the optimal price of link $l$ and $e^\mu_l(k) \triangleq \mu_l(k) -\mu_l^\star $ is the distance between the current price and the optimal price of link $l$. It is easy to note that \eqref{eq:NUM_PD_controller} corresponds to a PD controller for driving the price of link $l$ to its optimal value. Hence, it is obvious that asymptotically~\eqref{eqn:num_dual_multistep} behaves like a PD controller.

To compare the gradient iterations with the multi-step congestion control mechanism, we present representative results from a network with $10$ links and $20$ flows which satisfies Assumption~\ref{ass:num}. The utility functions are on the form $-(M_s-x_s)^2/2$ and $m_s=0$ and $M_s=10^5$ for all sources. As shown in Figure~\ref{fig:num_acceleration}, substantial speedups are obtained.

As a final remark, note that Lemma~\ref{lem:eigenvalue_bouds} underestimates $\lambda_1$ and overestimates $\lambda_n$, so we have no formal guarantee that the multi-step method will always outperform the gradient-based algorithm. However, in our experiments with a large number of randomly generated networks, the disadvantageous situation identified in Section~\ref{sec:robustness} never occurred.

\section{Conclusions}
\label{sec:conclusion}
We have studied accelerated gradient methods for network-constrained optimization problems. In particular, given the bounds of the Hessian of the objective function and the Laplacian of the underlying communication graph, we derived primal and dual multi-step techniques that allow to improve the convergence factors significantly compared to the standard gradient-based techniques. We derived optimal parameters and convergence factors, and characterized the robustness of our tuning rules to errors that occur when critical problem parameters are not known but have to be estimated. Our multi-step techniques were applied to three classes of problems: distributed resource allocation under a network-wide resource constraint, distributed average consensus, and Internet congestion control. We demonstrated, both analytically and in numerical simulations, that the approaches developed in this paper outperform, and often significantly outperforms, alternatives from the literature.
\appendices
\begin{center}
Appendix
\end{center}
\dspace
\subsection{Proof of Theorem \ref{thm:multi_step}}
\label{proof:multi_step}
Let $x^\star$ be the optimizer of~\eqref{eqn:problem_formulation}.
The Taylor series expansion of $\nabla f\big( x(k)\big)$ around $x^{\star}$ yields
\begin{align*}
W \nabla f\big(x(k)\big) &\cong W (\nabla f(x^\star)+\nabla ^2 f(x^{\star})(x(k)-x^{\star}))\\
&= W \nabla ^2 f(x^{\star})(x(k)-x^{\star})
\end{align*}
since $W\nabla f(x^{\star})=0$ by ~\eqref{eqn:fixed_point_feasibility_cond} and \eqref{eqn:weight_constraints}. Introducing
\begin{align*} z(k) \triangleq [  x(k)-x^{\star},\, x(k-1)-x^{\star} ]^\top, \end{align*}
we can thus re-write (\ref{eqn:multistep_scaled_gradient}) as
\begin{equation}
z(k+1) = \underset{\Gamma}{\underbrace{\begin{bmatrix}
B& -\beta I \\ I& 0 \end{bmatrix}
 }}
z(k)+o(z(k)^2),
\label{Amatrix}
\end{equation}
where $B=(1+\beta)I- \alpha W H$ and $ H=\nabla ^2 f(x^{\star})$. Now, for non-zero vectors $v_1$ and $v_2$, consider the eigenvalue equation
\begin{equation}
 \left[\begin{IEEEeqnarraybox*}[][c]{,c/c,}
B & -\beta I \\ I& 0  \end{IEEEeqnarraybox*}\right]
 \left[ \begin{array}{c} v_1 \\ v_2 \end{array}\right]
= \lambda (\Gamma) \left[ \begin{array}{c} v_1 \\ v_2 \end{array}\right]
 \nonumber
 \end{equation}
Since $v_1=\lambda(\Gamma) v_2$, the first row can be re-written as
\begin{equation}
\label{eq:AcharateristicPolynom}
\left( - \lambda^2(\Gamma)I+\lambda(\Gamma) B - \beta I \right)v_2=0.
\end{equation}
Note that (\ref{eq:AcharateristicPolynom}) is a polynomial in $B$ and $B$ is in turn a polynomial in $WH$. Hence, if $\mu$ and $\lambda$ denote the eigenvalues of $B$ and $WH$, respectively, we have
\begin{equation}
\lambda^2(\Gamma) - \left(1+\beta - \alpha \lambda\right) \lambda(\Gamma) + \beta = 0.
\label{eq:charachteristic}
\end{equation}
 The roots of (\ref{eq:charachteristic}) have the form
\begin{equation}
\lambda(\Gamma) =  \frac{1+\beta - \alpha \lambda \pm \sqrt{\Delta}}{2},
\quad
\Delta = \left(1+\beta - \alpha \lambda\right)^2 - 4 \beta.
\label{eq:roots}
\end{equation} If $\Delta \geq 0$, then $|\lambda(\Gamma)|<1$ is equivalent to
\begin{align*}
\nonumber
&\left(1+\beta - \alpha \lambda\right)^2 - 4 \beta \geq 0&\\
&-2< 1+\beta -\alpha \lambda \pm \sqrt{(1+\beta-\alpha \lambda)^2 - 4\beta}<2.&
\end{align*}
which, after simplifications, yield
\[
0<\alpha <2(1+\beta)/\lambda.\]
On the other hand, if $\Delta < 0$, then $|\lambda(\Gamma)|<1$ is equivalent to
\begin{align*}
0 \le \frac{(1+\beta - \alpha \lambda)^2 - \Delta}{4}<1,
\end{align*}
which, after similar simplifications, implies that $0\le \beta <1$.

Note that the upper bound for $\alpha$ gives a necessary condition for $\lambda$. Here we find an upper bound for this eigenvalue. Since $H$ is a positive diagonal matrix, under similarity equivalence we have $WH \sim H^{1/2}WH H^{-1/2}=H^{1/2}WH^{1/2}$.
Without loss of generality assume $x\in \mathbf{R}^{n}$ and $x^\top  x=1$, Then~$x^\top WH x = x^\top  H^{1/2}WH^{1/2}x=y^\top  Wy$,
where~$y=H^{1/2}x$. Clearly, for $y^\top Wy$ it holds that
\begin{align*}
\lambda_1(W)y^\top y\leq y^\top Wy \leq \lambda_{n}(W)y^\top y.
\end{align*}
Now, ~$l\leq y^{\top}y=x^\top Hx\leq u$, implies~$l\lambda_1(W)\leq x^\top  WH x \leq u \lambda_{n}(W)$.
and hence, a sufficient condition on $\alpha$ reads
\begin{align}
0<\alpha< \frac{2(1+\beta)}{u\lambda_{n}(W)}.
\label{eq:alphaBound_sufficient}
\end{align}

Having proven the sufficient conditions for convergence stated in the theorem, we now proceed to estimate the convergence factor. To this end, we need the following lemmas describing the eigenvalue characteristics of $WH$ and $\Gamma$.

\begin{lemma}
\label{lem:multi_step_assist1}
 If $W$ has $m<n$ zero eigenvalues, then $WH$ has exactly $n-m$ nonzero eigenvalues,
 i.e.~$\lambda_1(WH)= \cdots = \lambda_m(WH)  = 0$, $ \lambda_i(WH) \neq 0 \quad  i = m+1, \cdots,n.$
\end{lemma}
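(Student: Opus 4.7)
The plan is to exploit the similarity transformation between $WH$ and $H^{1/2}WH^{1/2}$, together with Sylvester's law of inertia, to transfer the known spectral structure of $W$ to $WH$.

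First I would observe that under Assumption~\ref{ass:objective_bounds} the Hessian $H=\nabla^{2}f(x^{\star})$ is a positive definite diagonal matrix, so $H^{1/2}$ is well-defined and nonsingular. As already noted in the text preceding the lemma, $WH = H^{-1/2}(H^{1/2}WH^{1/2})H^{1/2}$, so $WH$ is similar to the symmetric matrix $S \triangleq H^{1/2}WH^{1/2}$. Similarity preserves the spectrum (with multiplicities), so it suffices to count the nonzero eigenvalues of $S$.

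Next I would apply Sylvester's law of inertia. Because $H^{1/2}$ is nonsingular and symmetric, $S = H^{1/2}WH^{1/2}$ is congruent to $W$. Sylvester's law then guarantees that $S$ and $W$ share the same inertia, i.e.\ the same number of positive, negative, and zero eigenvalues. By hypothesis $W$ has exactly $m$ zero eigenvalues, hence $S$ has exactly $m$ zero eigenvalues and $n-m$ nonzero ones. Pulling this conclusion back through the similarity, $WH$ must also have exactly $n-m$ nonzero eigenvalues, which is what the lemma claims.

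I do not anticipate a real obstacle here: the argument is a two-line application of similarity plus Sylvester's law, and the only things to check carefully are that $H$ is genuinely nonsingular (which follows from $l_{v}>0$ in Assumption~\ref{ass:objective_bounds}) and that $S$ is symmetric so that its rank and its number of nonzero eigenvalues coincide. If one preferred not to invoke Sylvester's law explicitly, one could instead note directly that $\mathrm{rank}(WH)=\mathrm{rank}(W)=n-m$ because $H$ is nonsingular, and then combine this with the symmetry of $S$ (via the similarity) to rule out any nontrivial Jordan block at the eigenvalue zero.
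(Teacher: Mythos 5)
Your proof is correct, but it follows a different route than the paper's. The paper argues directly on $WH$: because $H$ is diagonal, every principal submatrix of $WH$ factors as $W_iH_i$, so all principal minors are nonnegative (using $W\succeq 0$ and $H\succ 0$), from which positive semidefiniteness of $WH$ is inferred, and the eigenvalue count then comes from $\mathrm{rank}(WH)=\mathrm{rank}(W)=n-m$. You instead pass to the symmetric matrix $S=H^{1/2}WH^{1/2}$, note that $WH$ is similar to $S$ and that $S$ is congruent to $W$ via the nonsingular symmetric factor $H^{1/2}$, and invoke Sylvester's law of inertia to transfer the count of zero and nonzero eigenvalues. Your route has two advantages: it sidesteps the delicate point that the principal-minor test for positive semidefiniteness is stated for symmetric matrices, whereas $WH$ is generally not symmetric (the paper's conclusion is really justified through the symmetric similar matrix, which you make explicit); and the similarity to $S$ automatically gives diagonalizability of $WH$, so the algebraic multiplicity of the zero eigenvalue genuinely equals $n-\mathrm{rank}(W)=m$, a point your closing remark about Jordan blocks correctly addresses and which the paper's pure rank argument leaves implicit. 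It is also consistent with the paper's own toolkit, since the same similarity $WH\sim H^{1/2}WH^{1/2}$ and the congruence/eigenvalue-scaling result from Horn and Johnson are used in the proofs of Theorem~\ref{thm:multi_step} and Proposition~\ref{prop:multistep_restricted}. The only implicit hypothesis you share with the paper is that $W$ is symmetric positive semidefinite, which holds throughout the paper's setting.
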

\begin{proof}
\label{proof_multi_step_assist1}
From \cite{HoJ:85} we know that if and only if all the principal submatrices of a matrix have nonnegative determinants then that matrix is positive semidefinite. Note that the $i$-th principal submatrix of $WH$, $WH_{i}$, is obtained by multiplication of the corresponding principal submatrix of $W$, $W_{i}$ by the same principal submatrix of $H$, $H_{i}$ from the right, and we have
$\det( WH_{i})=\det(W_{i})\det(H_{i}).$
We know $\det(H_{i})> 0$ and $\det(W_{i})\geq 0$ because $W\geq 0$, thus $\det(WH_{i})\geq 0$ and $WH $ is positive semidefinite. Furthermore
$\mbox{rank}(WH)=\mbox{rank}(W)$.
So $\mbox{rank}(WH)=n-m$ and it means that $WH $ has exactly $m$ zero eigenvalues.
\end{proof}
\begin{lemma}
\label{lem:multi_step_assist2}
 For any $WH$ such that  $\lambda_i(WH) = 0 \mbox{ for }  i=1,\cdots, m$, and $\lambda_i(WH) \neq 0, \mbox{ for }  i = m+1, ...,n.$, the matrix $\Gamma$ has $m$ eigenvalues equal to $1$ and
 the absolute values of the rest of the $2n-m$ eigenvalues are strictly less than $1$.
\end{lemma}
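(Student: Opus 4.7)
\subsection*{Proof proposal for Lemma \ref{lem:multi_step_assist2}}

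The plan is to reduce the spectral analysis of the $2n\times 2n$ matrix $\Gamma$ to the spectrum of $WH$ by exploiting the block structure of $\Gamma$, and then to treat separately the $m$ eigenvalues of $WH$ that equal zero and the $n-m$ nonzero ones. This is essentially a counting argument built on top of the characteristic equation \eqref{eq:charachteristic} that was already derived in the proof of Theorem~\ref{thm:multi_step}.

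First I would compute the characteristic polynomial of $\Gamma$ in closed form. Because the two lower blocks $-I$ and $\lambda(\Gamma) I$ commute, the block-determinant identity gives
\begin{equation*}
\det\!\bigl(\lambda(\Gamma) I_{2n}-\Gamma\bigr)=\det\!\bigl(\lambda(\Gamma)^2 I_n - \lambda(\Gamma)B+\beta I_n\bigr).
\end{equation*}
By Lemma~\ref{lem:multi_step_assist1}, $WH$ is similar to the symmetric matrix $H^{1/2}WH^{1/2}$, hence diagonalizable with real nonnegative eigenvalues. Since $B=(1+\beta)I-\alpha WH$ is a polynomial in $WH$, it shares the same eigenbasis, and the determinant above factorizes as
\begin{equation*}
\det\!\bigl(\lambda(\Gamma) I_{2n}-\Gamma\bigr)=\prod_{i=1}^{n}\Bigl(\lambda(\Gamma)^2-\bigl(1+\beta-\alpha\lambda_i(WH)\bigr)\lambda(\Gamma)+\beta\Bigr).
\end{equation*}
Thus every eigenvalue of $\Gamma$ arises as a root of one of these $n$ quadratics, one per eigenvalue of $WH$.

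Next I would handle the two cases separately. For each of the $m$ indices with $\lambda_i(WH)=0$, the quadratic factor reduces to $\lambda(\Gamma)^2-(1+\beta)\lambda(\Gamma)+\beta=(\lambda(\Gamma)-1)(\lambda(\Gamma)-\beta)$, contributing exactly one eigenvalue at $1$ and one at $\beta$. This accounts for $m$ eigenvalues of $\Gamma$ equal to $1$ and $m$ eigenvalues equal to $\beta$; because $0\leq\beta<1$ under the hypotheses of Theorem~\ref{thm:multi_step}, the latter are strictly inside the unit disk. For each of the $n-m$ indices with $\lambda_i(WH)\neq 0$, I would invoke the sufficient conditions established just before the statement of the lemma: the bounds $0\leq\beta<1$ together with $0<\alpha<2(1+\beta)/(u\lambda_n(W))$ imply (via the same discriminant analysis that produced \eqref{eq:alphaBound_sufficient}) that both roots of the corresponding quadratic satisfy $|\lambda(\Gamma)|<1$. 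This contributes another $2(n-m)$ eigenvalues strictly inside the unit disk.

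Summing the two contributions gives $m$ eigenvalues of $\Gamma$ equal to $1$ and $m+2(n-m)=2n-m$ eigenvalues with absolute value strictly less than $1$, which is exactly the claim. The only delicate step is the block-determinant factorization together with the simultaneous diagonalizability argument; once that is in place, everything else is reading off roots of quadratics. I do not anticipate any real obstacle, since the strict inequality $|\lambda(\Gamma)|<1$ for the nonzero modes has already been proven in the preceding paragraphs of the theorem's proof, and the zero modes are handled by an explicit factorization.
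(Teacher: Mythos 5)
Your proposal is correct and follows essentially the same route as the paper: each eigenvalue of $WH$ is fed into the quadratic \eqref{eq:charachteristic}, the zero eigenvalues give the factor $(\lambda(\Gamma)-1)(\lambda(\Gamma)-\beta)$, and the nonzero ones have both roots strictly inside the unit disk by the previously derived bounds on $\alpha$ and $\beta$. The only difference is cosmetic but welcome: your block-determinant factorization of $\det(\lambda(\Gamma)I_{2n}-\Gamma)$ makes the multiplicity count of the $m$ unit eigenvalues explicit, whereas the paper relies on the earlier eigenvector argument and leaves that bookkeeping implicit (and writes the modulus of the complex roots as $\beta$ rather than $\sqrt{\beta}$, which your version avoids).
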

\begin{proof}
For complex $\lambda_i(\Gamma)$ we have $|\lambda_i(\Gamma)|=\beta<1$. For real-valued $\lambda_i(\Gamma)$, on the other hand, the bound on $\alpha$ implies that $\alpha(\lambda (WH))$ is a decreasing function of $\lambda$. In this case, $0<\alpha< \frac{2(1+\beta)}{\overline{\lambda}}$ guarantees that $0 < \alpha < \frac{2(1+\beta)}{\lambda_i(WH)}$
for any $0< \lambda_i(WH) \le \overline{\lambda} $. Note that if we set a tighter bound on $\alpha$, then it does not change satisfactory
condition for having $\left| \lambda(\Gamma) \right|<1$. Only when $ \lambda_i(WH) = 0$,
we have $\lim_{x \rightarrow 0} \alpha = \infty$. For this case, if we substitute $\lambda_i (WH) = 0$
in (\ref{eq:charachteristic}) we obtain $\lambda_{2i-1}(\Gamma)= 1$ and $\lambda_{2i}(\Gamma) = \beta<1$.
\end{proof}

We are now ready to prove the remaining parts of Theorem~\ref{thm:multi_step}. By the Lemmas above, $\Gamma$ has $m<n$ eigenvalues equal to $1$, which correspond to the $m$ zero eigenvalues of $W$ implied by the optimality condition~\eqref{eqn:weight_constraints}. Hence, minimizing $m+1$-th largest eigenvalue of (\ref{Amatrix}) leads
to the optimum convergence factor of the multi-step weighted gradient iterations~\eqref{eqn:multistep_scaled_gradient}.
Calculating $\overline{\lambda}_{\Gamma}\triangleq \underset{\alpha , \beta}{\min} \, \underset{1 \leq j \leq 2n-m}{\max} \left|\lambda_{j}(\Gamma)\right|$
yields the optimum $\alpha^{\star}$ and $\beta^{\star}$. Considering that (\ref{eq:roots}) are the eigenvalues of $\Gamma$,
\begin{align*}
\overline{\lambda}_{\Gamma} = \frac{1}{2} \max \left\{ \left\vert 1+\beta - \alpha \lambda_i\right\vert  +\sqrt{(1+\beta - \alpha \lambda_i)^2-4\beta} \right\},
\end{align*} where $\lambda_i\triangleq \lambda_i(WH),\, \forall i=m+1,..,n$. There are two cases:

Case 1: $\left(1+\beta - \alpha \lambda_i\right)^2- 4 \beta \geq 0$. Then, $a$ and $b$ are non-negative and real with $a \geq b$. Hence, $a^2-b^2 \geq (a-b)^2$ and consequently $a+\sqrt{a^2-b^2} \geq 2a -b \geq b$.

Case 2: $(1+\beta - \alpha \lambda_i)^2- 4 \beta < 0$. In this case, $\lambda_i(\Gamma)$ is complex-valud. Consider $c, d \in \mathbf{R}^+$ with $c< d$. Then,  $\vert c+\sqrt{c^2-d}\vert = \sqrt{c^2-c^2+d} = \sqrt{d} \geq 2c-\sqrt{d}$.

If we substitute these results into $\overline{\lambda}_{\Gamma} $ with $a=1+\beta -\alpha \lambda _i$, $b=2\sqrt{\beta}$ , $c=\vert 1+\beta-\alpha \lambda_i\vert$ and $d=4\beta$  we get
\begin{align*}
\overline{\lambda}_{\Gamma} \geq \max\left\{ \sqrt{\beta}, \max\left\{\vert 1+\beta -\alpha \lambda_i\vert -\sqrt{\beta}\right\}\right\},
\end{align*}
which can be expressed in terms of $\underline{\lambda}$ and $\overline{\lambda}$:
\begin{equation}
\overline{\lambda}_{\Gamma} \geq \max\left\{\sqrt{\beta}, \vert 1+\beta -\alpha \underline{\lambda}\vert -\sqrt{\beta}, \vert 1+\beta -\alpha \overline{\lambda}\vert -\sqrt{\beta} \right\}.
\label{extreme}
\end{equation}
It can be verified that
\begin{align}
\begin{array}{ll}
\max  &\left\{\vert 1+\beta -\alpha \underline{\lambda}\vert-\sqrt{\beta}, \vert 1+\beta -\alpha \overline{\lambda}\vert -\sqrt{\beta} \right\} \\
&\geq \vert 1+\beta-\alpha^{\prime}\underline{\lambda}\vert -\sqrt{\beta},
\end{array}
\label{extreme2}
\end{align}
where $\alpha^{\prime}$
is such that~$\left|1+\beta -\alpha^{\prime} \underline{\lambda}\right|= \left|1+\beta -\alpha^{\prime} \overline{\lambda}\right|$, \emph{i.e.}
\begin{equation}
\alpha^{\prime} = \frac{2(1+\beta)}{\underline{\lambda}+\overline{\lambda}}.
\label{alpha'}
\end{equation}
From (\ref{extreme}), (\ref{extreme2}) and (\ref{alpha'}), we thus obtain
\begin{equation}
\overline{\lambda}_{\Gamma} \geq \max \left\{ \sqrt{\beta},(1+\beta)\frac{\overline{\lambda}-\underline{\lambda}}{\overline{\lambda}+\underline{\lambda}} -\sqrt{\beta}\right\}.
\label{maxBeta}
\end{equation}
Again, the max-operator can be bounded from below by its value at the point where the arguments are equal. To this end, consider $\beta^{\prime}$ whcih satisfies
\begin{equation}
 \sqrt{\beta^{\prime}} = (1+\beta^{\prime})\frac{\overline{\lambda}-\underline{\lambda}}{\overline{\lambda}+\underline{\lambda}} -\sqrt{\beta^{\prime}},
 \nonumber
\end{equation}
that is,
\begin{equation}
 \beta^{\prime} = \left(\frac{\sqrt{\overline{\lambda}}-\sqrt{\underline{\lambda}}}{\sqrt{\overline{\lambda}}+\sqrt{\underline{\lambda}}}\right)^2.
 \label{beta'}
\end{equation}
Since
\begin{equation}
 \max \left\{ \sqrt{\beta}, (1+\beta)\frac{\overline{\lambda}-\underline{\lambda}}{\overline{\lambda}+\overline{\lambda}}-\sqrt{\beta} \right\} \geq \sqrt{\beta^{\prime}}, \label{braceBound}
\end{equation}
we can combine (\ref{braceBound}) and (\ref{maxBeta}) to conclude that
\begin{equation}
\overline{\lambda}_{\Gamma} \geq \sqrt{\beta^{'}} = \frac{\sqrt{\overline{\lambda}}-\sqrt{\underline{\lambda}}}{\sqrt{\overline{\lambda}}+\sqrt{\underline{\lambda}}}
\label{secondlargestbound}
\end{equation}
Our proof is concluded by noting that equality in (\ref{secondlargestbound}) is attained for the smallest non-zero eigenvalue of $\Gamma$ and the optimal step-sizes $\beta^\star$ and $\alpha^{\star}$ stated in the body of the theorem.
\dspace
\subsection{Proof of Proposition~\ref{prop:multistep_restricted}}
\label{proof:prop:multistep_restricted}
As shown in the proof of Theorem 1, the eigenvalues of $WH$ are equal to those of $H^{1/2}WH^{1/2}$. According to \cite[p.225]{HoJ:85} for matrices $W$ and $H^{1/2}WH^{1/2}$, there exists a nonnegative real number $\theta_k$ such that $\lambda_1(H)\leq \theta_k \leq \lambda_n(H)$ and $\lambda_k(H^{1/2}WH^{1/2})=\theta_k \lambda_k(W)$. Letting $k=m+1$ and $k=n$, yields
  $\underline{\lambda} \geq l \underline{\lambda}_W$ and $\overline{\lambda} \leq u \overline{\lambda}_W$. The rest of the proof is similar to that of Theorem \ref{thm:multi_step} and is omitted for brevity.
\dspace
\subsection{Proof of Proposition~\ref{prop:multistep_condition_number_minimization}}
\label{proof:prop:multistep_condition_number_minimization}
Direct calculations yield~$ q^{\star}=(\sqrt{\overline{\lambda}}-\sqrt{\underline{\lambda}})/(\sqrt{\overline{\lambda}}+\sqrt{\underline{\lambda}})  =  1-2/((\overline{\lambda}/\underline{\lambda})^{1/2}+1)$.
Similarly,~$ \widetilde{q}=1-2/((\overline{\lambda}_W/\underline{\lambda}_W)^{1/2}+1)$.
Hence, minimizing $q^\star$ and $\widetilde{q}$ are equivalent to minimizing the condition number of $WH$ and $W$, respectively. 
\dspace
\subsection{Proof of Proposition~\ref{prop:weight_optimization}}
\label{proof:prop:weight_optimization}
Similar to the proof of Theroem~\ref{thm:multi_step} it can be seen that the eigenvalues of $\omega H$ are equal to the ones of $\Omega\triangleq H^{1/2}\omega H^{1/2}$. To have the $m$ zero eigenvalues of $\Omega$ corresponding to the condition $WA^\top = \textbf{0}$ in~\eqref{eqn:weight_constraints}, one needs to condition $V$ in~\eqref{eqn:weight_optimization} to belong to the kernel of $W H^{1/2}$.
Moreover, to restrict the search of $\omega$ to the nonzero eigenspace of $W$, we should have~$x^\top \Omega x > 0$ for all nonzero $x\in V^\bot$. This condition is equivalent to having $y^\top P^\top \Omega P y >0$ for all nonzero $y\in \mathbf{R}^{n}$ and $P$ being the matrix of vectors spanning $V^\bot$.
\dspace
\subsection{Proof of Lemma \ref{lem:primal_dual}}
\label{proof:lem:primal_dual}
To prove (a) we exploit the equivalence of $l$-strong convexity of $f(\cdot)$ and $1/l$-Lipschitz continuity of $\nabla f_{\star}$. Specially according to ~\cite[Theorem 4.2.1]{UrL:96}, for nonzero $z_1,z_2 \in \mathbf{R}^n $,  Lipschitz continuity of $\nabla f_{\star}$ implies that
 \begin{align}
 \nonumber
 \langle \nabla f_{\star}(z_1)-\nabla f_{\star}(z_2), z_1-z_2 \rangle \leq \frac{1}{l}\|z_1-z_2\|^2
 \end{align}
Now, for $-\nabla d(z)= -A\nabla f_{\star}(-A^\top z)+b$, change the right hand side of above inequality to have
\begin{align}
\nonumber
&\langle -\nabla d(z_1)+\nabla d(z_2), z_1-z_2 \rangle\\
\nonumber
&=\langle \nabla f_{\star}(-A^\top z_1)-\nabla f_{\star}(-A^\top z_2), -A^\top (z_1-z_2)\rangle.
\end{align}
In light of $1/l$-Lipschitzness of $\nabla f^{\star}$ we get
\begin{align}
\nonumber
&\langle \nabla f_{\star}(-A^\top z_1)-\nabla f_{\star}(-A^\top z_2), -A^\top (z_1-z_2)\rangle  \\
\nonumber
&\leq \frac{1}{l} \|-A^\top  (z_1-z_2)\|^2  \leq \frac{\lambda_{n}(AA^\top )}{l}  \|z_1-z_2\|^2.
\end{align}
(b) According to ~\cite[Theorem 4.2.2]{UrL:96}, If  $\nabla f(\cdot)$ is $u$-Lipschitz continuous then $f_\star$ is $1/u$-strongly convex, \ie, for non-identical $z_1,z_2 \in \mathbf{R}^n$
\begin{align}
\nonumber
\langle \nabla f_{\star}(z_1)-\nabla f_{\star}(z_2), z_1-z_2 \rangle \geq \frac{1}{u} \|z_1-z_2\|^2
\end{align}
One can manipulate above inequality as
\begin{align}
\nonumber
&\langle -\nabla d(z_1)+\nabla d(z_2), z_1-z_2 \rangle\\
\nonumber
&= \langle \nabla f_{\star}(-A^\top z_1)-\nabla f_{\star}(-A^\top z_2), -A^\top (z_1-z_2) \rangle \\
 \nonumber
 &\geq \frac{1}{u} \|-A^\top (z_1-z_2)\|^2 \geq \frac{\lambda_1(AA^\top)}{u} \|z_1-z_2\|^2.
\end{align}
It is worth noting that here we assume that $A$ is row full rank.
\dspace
\subsection{Proof of Theorem~\ref{thm:dual_convergence}}
\label{proof:thm_dual_convergence}
The result follows from Lemma~\ref{lem:primal_dual} and Theorem~\ref{thm:multi_step} with  $W=I$  and noting that $(\lambda_1(AA^\top)/u) I\leq H \leq (\lambda_n(AA^\top)/l) I$.
\dspace
\subsection{Proof of Lemma~\ref{lem:weighted_gradient_convergence}}
\label{proof:lem:Weighted_gradient_convergence}
Since $f$ is twice differentiable on $[x^{\star}, x]$, we have
\begin{align*}
\nabla f\big(x\big) &= \nabla f(x^\star) + \int_0^1 {\nabla^2 f(x^\star+ \tau(x-x^\star))(x-x^\star)} d\tau \\
& =  A^\top \mu^\star + H(x) (x-x^\star),
\end{align*}
where we have used the fact that $\nabla f(x^{\star})=A^{\top}\mu^{\star}$ and introduced $H(x)=\int_0^1{\nabla^2 f(x^\star+ \tau(x-x^\star))}d\tau$. By virtue of Assumption~\ref{ass:objective_bounds}, $H(x)$ is symmetric and nonnegative definiteand satisfies $l I \leq H(x) \leq u I$~\cite{polyak} . Hence from~\eqref{eqn:scaled_gradient_iteration} and~\eqref{eqn:weight_constraints}
\begin{align*}
&\Vert x(k+1) - x^\star \Vert = \Vert x(k)-x^\star - \alpha W \nabla f\big(x(k)\big) \Vert  \\
&=\Vert x(k) - x^\star - \alpha W (A^\top \mu^\star + H(x(k))(x(k)-x^\star)) \Vert \\
&= \Vert (I - \alpha W H(x(k))) (x(k)-x^\star) \Vert\\
& \leq \Vert I - \alpha W H(x(k)) \Vert \Vert x(k)-x^\star \Vert.
\end{align*}
The rest of the proof follows the same steps as~\cite[Theorem~3]{polyak}. Essentially for fixed step-size $0<\alpha < 2/\overline{\lambda}$, the iterations in ~\eqref{eqn:scaled_gradient_iteration} converge linearly with factor $q_2=\mbox{max} \{ \vert 1-\alpha \underline{\lambda} \vert, \vert 1-\alpha \overline{\lambda}\vert\}$. The minimum convergence factor $q_{G}^\star = \frac{\overline{\lambda}-\underline{\lambda}}{\underline{\lambda}+\overline{\lambda}}$ is obtained by minimizing $q_{G}$ over $\alpha$, which yields the optimal step-size $\alpha^\star = \frac{2}{\underline{\lambda}+\overline{\lambda}}$.
\dspace
\subsection{Proof of Proposition~\ref{prop:robustness_stability_conditions}}
\label{proof:prop:robustness_stability_conditions}
According to Lemma~\ref{lem:weighted_gradient_convergence}, the weighted gradient iterations~\eqref{eqn:scaled_gradient_iteration} with estimated step-size $\oest{\alpha} = 2/(\uest{\lambda}+\oest{\lambda})$ will converge provided that~$0<\oest{\alpha}<2/\overline{\lambda}$, \emph{i.e.} when $\overline{\lambda}< \uest{\lambda}+\oest{\lambda}$.

For the multi-step algorithm~\eqref{eqn:multistep_scaled_gradient}, Theorem~\ref{thm:multi_step} guarantees convergence if $0\leq \oest{\beta}<1, \, 0<\oest{\alpha}<2(1+\oest{\beta})/\overline{\lambda}.$ The assumption $0<\uest{\lambda}\leq \oest{\lambda}$ implies that the condition on $\oest{\beta}$ is always satisfied. Regarding $\oest{\alpha}$, inserting the expression for $\oest{\beta}$ in the upper bound for $\oest{\alpha}$ and simplifying yields
\[
\frac{4}{
\left(\sqrt{\uest{\lambda}}+\sqrt{\oest{\lambda}}\right)^2} <
2\frac{2(\oest{\lambda}+\uest{\lambda)}}{\left(\sqrt{\oest{\lambda}}+\sqrt{\uest{\lambda}}\right)^2}
\frac{1}{\overline{\lambda}}
%
\]
which is satisfied if  $0<\overline{\lambda}<\oest{\lambda}+\uest{\lambda}$. The statement is proven.
\dspace
\subsection{Proof of Lemma~\ref{lem:conv_factors_inaccurate}}
\label{proof:lem:conv_factors_inaccurate}
We consider two cases. First, when $\uest{\lambda}+\oest{\lambda}<\underline{\lambda}+\overline{\lambda}$ combined with the assumption that
$0<\overline{\lambda}<\uest{\lambda}+\oest{\lambda}$ yields $\oest{\alpha}\overline{\lambda}>1$, which means that $\vert 1-\oest{\alpha}\overline{\lambda}\vert = \oest{\alpha}\overline{\lambda}-1$.  Moreover, $\oest{\alpha}\overline{\lambda}-1\geq 1 - \oest{\alpha}\underline{\lambda}$, so by Lemma~\ref{lem:weighted_gradient_convergence}
\begin{align*}
\oest{q}_G&=
\mbox{max}\{\oest{\alpha}\overline{\lambda}-1, \max\{1-\oest{\alpha}\underline{\lambda}, \oest{\alpha}\underline{\lambda}-1\}\}=
\oest{\alpha}\overline{\lambda}-1\\
&=2\overline{\lambda}/(\uest{\lambda}+\oest{\lambda})-1.
\end{align*}

The second case is when $\uest{\lambda}+\oest{\lambda}>\underline{\lambda}+\overline{\lambda}$. Then,  $\oest{\alpha}\underline{\lambda}<1$ and hence $|1-\oest{\alpha}\underline{\lambda}|=1-\oest{\alpha}\underline{\lambda}$. Moreover, $1-\oest{\alpha}\underline{\lambda}\geq \oest{\alpha}\overline{\lambda}-1$, so
\begin{align*}
\oest{q}_G&=
\mbox{max}\{1-\oest{\alpha}\underline{\lambda},\max\{ \oest{\alpha}\overline{\lambda}-1, 1-\oest{\alpha}\overline{\lambda}\}\}
=1-\oest{\alpha}\underline{\lambda}\nonumber
\end{align*}

The convergence factor of the multi-step iterations with inaccurate step-sizes~\eqref{eqn:multistep_step_inaccurate} follows directly from Theorem~\ref{thm:multi_step}.
\dspace
\subsection{Proof of Proposition~\ref{prop:conv_factors_inaccurate}}
\label{proof:prop:conv_factors_inaccurate}
We analyze the four quadrants ${\mathcal Q}_1$ through ${\mathcal Q}_4$ in order.
\begin{enumerate}
\item[${\mathcal Q}_1:$] when $(\uest{\varepsilon},\oest{\varepsilon}) \in {\mathcal Q}_1$ we have
    $\uest{\lambda}>\underline{\lambda}$ and $\oest{\lambda}>\overline{\lambda}>\overline{\lambda}$. From convergence factor of multi-step weighted gradient method given in~\eqref{eqn:multistep_inaccurate_factor} it then follows that
    \begin{align*}
    \oest{q}= 1+\oest{\beta}-\oest{\alpha}\underline{\lambda}-\oest{\beta}^{1/2}.
    \end{align*}
Moreover, since in this quadrant $\oest{\lambda}+\uest{\lambda}\geq \overline{\lambda}+\underline{\lambda}$, from~\eqref{eqn:gradient_inaccurate_factor} we have $\oest{q}_G=1-2\underline{\lambda}/(\uest{\lambda}+\oest{\lambda})$. A direct comparison between the two expressions yields that $\oest{q}\leq \oest{q}_G$.
\item[${\mathcal Q}_2:$] when $(\uest{\varepsilon},\oest{\varepsilon}) \in {\mathcal Q}_2$ we have $\underline{\lambda}<\uest{\lambda}$ and $\oest{\lambda} < \overline{\lambda}$. Combined with the stability assumption $\uest{\lambda}+\oest{\lambda} > \overline{\lambda}$, straightforward calculations show that the convergence factor of the multi-step iterations with inaccurate step-sizes~\eqref{eqn:multistep_step_inaccurate} is
 \begin{align*}
 \oest{q}=
\left\{
\begin{array}{ll}
\oest{\alpha}\overline{\lambda} - \oest{\beta} - 1-\sqrt{\oest{\beta}} & \oest{\lambda}+\oest{\lambda} \leq \underline{\lambda}+\overline{\lambda},\\
1+\oest{\beta}-\oest{\alpha}\underline{\lambda}-\sqrt{\oest{\beta}} & \mbox{otherwise},
\end{array}\right.
 \end{align*}
 Moreover, for this quadrant the convergence factor of weighted gradient method is given by~\eqref{eqn:gradient_inaccurate_factor}.
To verify that $\oest{q}<\oest{q}_G$ we perform the following comparisons:

(a) \label{proof:hb_better_left_corner_item2} If $\uest{\lambda}+ \oest{\lambda} < \underline{\lambda} + \overline{\lambda}$ then we have $\oest{q}=\oest{\alpha}\overline{\lambda}-\oest{\beta}-1-\oest{\beta}^{1/2}$ and $\oest{q}_G = (2\overline{\lambda})/(\uest{\lambda}+ \oest{\lambda})-1$. To show that $\oest{q}<\oest{q}_G$ we rearrange it to obtain the following inequality
\begin{align*}
\Delta \triangleq (\overline{\lambda}-\oest{\lambda}+\oest{\lambda}^{1/2}\uest{\lambda}^{1/2})(\oest{\lambda}+\uest{\lambda})-2\overline{\lambda}\oest{\lambda}^{1/2}\uest{\lambda}^{1/2}<0.
\end{align*}
Further simplifications yield
\begin{align*}
 \Delta &=(\oest{\lambda}+\uest{\lambda} - 2 (\oest{\lambda}\uest{\lambda})^{1/2})\overline{\lambda} -(\oest{\lambda}-(\oest{\lambda}\uest{\lambda})^{1/2})(\oest{\lambda}+\uest{\lambda})  \\
    &=(\oest{\lambda}^{1/2}-\uest{\lambda}^{1/2})^2 \overline{\lambda} - \oest{\lambda}^{1/2}(\oest{\lambda}^{1/2}-\uest{\lambda}^{1/2}) (\oest{\lambda}+\uest{\lambda})\\
    &=(\oest{\lambda}^{1/2}-\uest{\lambda}^{1/2})\left((\oest{\lambda}^{1/2}-\uest{\lambda}^{1/2})\overline{\lambda} - \oest{\lambda}^{1/2}(\oest{\lambda}+\uest{\lambda})\right)\\
    &=(\oest{\lambda}^{1/2}-\uest{\lambda}^{1/2})\left(-\oest{\lambda}^{1/2}(\oest{\lambda}+\uest{\lambda}-\overline{\lambda}) - \uest{\lambda}^{1/2}\overline{\lambda}\right) < 0
   \end{align*}

    Note that the negativity of above quantity comes from the stability condition, $\oest{\lambda}+\uest{\lambda}>\overline{\lambda}$.

(b) If $\uest{\lambda}+ \oest{\lambda} > \underline{\lambda} + \overline{\lambda}$ then we have $\oest{q} = 1+\oest{\beta}-\oest{\alpha}\underline{\lambda} -(\oest{\beta})^{1/2}$ and $\oest{q}_G =1-(2\underline{\lambda})/(\uest{\lambda}+\oest{\lambda})$. After some simplifications, we see that $\oest{q}< \oest{q}_G$ boils down to the inequality
    $-(\uest{\lambda}+\oest{\lambda})\uest{\lambda}^{1/2}\oest{\lambda}^{1/2}+2\underline{\lambda}\uest{\lambda}^{1/2}\oest{\lambda}^{1/2}-\underline{\lambda}(\uest{\lambda}+\oest{\lambda})< 0 $ or equivalently $-(\uest{\lambda}+\oest{\lambda}-2\underline{\lambda})\uest{\lambda}^{1/2}\oest{\lambda}^{1/2} -\underline{\lambda}(\uest{\lambda}+\oest{\lambda})<0$ which holds by noting that $\uest{\lambda}+\oest{\lambda}>\underline{\lambda}+\overline{\lambda}>2\underline{\lambda}$.

(c) for the case $\uest{\lambda}+ \oest{\lambda} = \underline{\lambda} + \overline{\lambda}$, we have~$\oest{q}=1+\oest{\beta}-\oest{\alpha}\underline{\lambda} -(\oest{\beta})^{1/2}$ and $\oest{q}_G=(\overline{\lambda}-\underline{\lambda})/(\underline{\lambda}+\overline{\lambda})$ which coincides with the optimal convergence factor of unperturbed gradient method.
 After some rearrangements we notice that $\oest{q}<\oest{q}_G$ reduces to checking that
\begin{align*}
    (\oest{\lambda}^{1/2}-\uest{\lambda}^{1/2})(\overline{\lambda}-\underline{\lambda}) < (\uest{\lambda}^{1/2}+\oest{\lambda}^{1/2})(\uest{\lambda}+\oest{\lambda})
\end{align*}
that holds since  $\oest{\lambda}^{1/2}-\uest{\lambda}^{1/2}<\uest{\lambda}^{1/2}+\oest{\lambda}^{1/2}$ and $\overline{\lambda}-\underline{\lambda}<\overline{\lambda}+\underline{\lambda} = \uest{\lambda}+\oest{\lambda}$.

\item [${\mathcal Q}_3:$] if $(\uest{\varepsilon},\oest{\varepsilon}) \in {\mathcal Q}_3$ we have  $0<\uest{\lambda}<\underline{\lambda}$ and $\oest{\lambda}< \overline{\lambda}$. Combined with the stability assumption $\oest{\lambda}+ \uest{\lambda}> \overline{\lambda}$, one can verify that the convergence factors of the two perturbed iterations are
$\oest{q}_G = (2\overline{\lambda})/(\uest{\lambda}+ \oest{\lambda})-1$ and $\oest{q}= \oest{\alpha}\overline{\lambda} - \oest{\beta} - 1-(\oest{\beta})^{1/2}$, respectively. The fact that $\oest{q}< \oest{q}_G$ was proven in step (a) of the analysis of ${\mathcal Q}_2$.
\item [${\mathcal Q}_4:$] if $(\uest{\varepsilon},\oest{\varepsilon}) \in {\mathcal Q}_4$ then,~\eqref{eqn:multistep_inaccurate_factor} implies that~$\oest{q} = \oest{\beta}^{1/2}.$
On the other hand, for this region~\eqref{eqn:gradient_inaccurate_factor} yields $\oest{q}_G=(\overline{\lambda}-\underline{\lambda})/(\underline{\lambda}+\overline{\lambda}).$ To conclude, we need to verify that there exists $\oest{\lambda}$ and $\uest{\lambda}$ such that $\oest{q}>\oest{q}_G$, \emph{i.e.} such that
 $(\oest{\lambda}^{1/2}-\uest{\lambda}^{1/2})/(\oest{\lambda}^{1/2}+\uest{\lambda}^{1/2})>(\overline{\lambda}-\underline{\lambda})/(\underline{\lambda}+\overline{\lambda})$.
We do so by multiplying both sides with $(\underline{\lambda}+\overline{\lambda})(\oest{\lambda}^{1/2}+\uest{\lambda}^{1/2})$ and simplifying to find that the inequality holds if
$\underline{\lambda} \oest{\lambda}^{1/2}>\overline{\lambda} \uest{\lambda}^{1/2}$, or equivalently $\oest{\lambda}/\uest{\lambda}>\overline{\lambda}^2/\underline{\lambda}^2$. The statement is proven.
\end{enumerate}
\dspace
\subsection{Proof of Proposition~\ref{prop:shift_register_optimal_parameters}}
\label{proof:prop:shift_register_optimal_parameters}
The iterations~\eqref{eqn:optimization_based_consensus_multistep} and~\eqref{eqn:shift_register_consensus} are equivalent when
\begin{align*}
	(1-\zeta) &= -\beta\\
	(1+\beta)I-\alpha W &= \zeta Q
\end{align*}
The first condition implies that $\zeta^{\star}=(1+\beta^{\star})$. Combining this expression with the second condition, we find
\begin{align*}
	Q^{\star} &= I-\frac{\alpha^{\star}}{1+\beta^{\star}}W^{\star} = I-\frac{2}{\underline{\lambda}+\overline{\lambda}}W^{\star}
\end{align*}
Noting that for the consensus case, $\underline{\lambda}=\lambda_2(W^{\star})$ and $\overline{\lambda}=\lambda_n(W^{\star})$ concludes the proof.
\dspace
\subsection{Proof of Lemma~\ref{lem:eigenvalue_bouds}}
\label{proof:lem:eigenvalue_bouds}
For the upper bound on $\lambda_n(RR^\top)$, we use a similar approach as \cite[Lemma~3]{low99}. Specially, from~\cite[p.313]{HoJ:85},
\begin{align*}
\lambda^2_n(RR^\top) &= \Vert R R^\top \Vert_2^2\leq \Vert R R^\top \Vert_\infty \Vert R R^\top \Vert_1 = \Vert R R^\top \Vert_\infty^2.
\end{align*}
Hence,
\begin{align*}
& \lambda_n(RR^\top) = \max\limits_l \sum_{l^\prime}[R R^\top ]_{ll^\prime} =\max\limits_l \sum_{l^\prime}\sum_{s}R_{ls}R_{l^\prime s} \\
&\leq \max\limits_l \sum_{s}R_{ls}l_{\max} \leq s_{\max}l_{\max}.
\end{align*}

To find a lower bound on $\lambda_1(RR^\top)$ we consider the definition $\lambda_1(RR^\top) = \underset {\Vert x\Vert_2 =1}{\min} \Vert R^\top x \Vert_2^2$. We have
\begin{align*}
[R^\top x]_{s} = \sum_{l=1}^{L} [R^\top ]_{sl} x_l = \sum_{l=1}^{L} R_{ls} x_l.
\end{align*}
According to Assumption~\ref{ass:num}, $R^\top$ has $L$ independent rows that have only one non-zero (equal to $1$) component. Hence,
\begin{align*}
\Vert R^\top x\Vert_2^2 &= \sum_{s=1}^L x_s^2 + \sum_{s=S-L+1}^{S} {\left(\sum_{l=1}^{L}R_{ls}x_l\right)^2} \\
&=1 + \sum_{s=S-L+1}^{n} {\left(\sum_{l=1}^{L}R_{ls}x_l\right)^2} \geq 1 ,
\end{align*}
where the last equality is due to $\Vert x \Vert_2 = 1$. 





\ifCLASSOPTIONcaptionsoff
  \newpage
\fi



\bibliographystyle{IEEEtran}
%
\bibliography{referencebib}

%
%

%

%
%
%
%



\end{document}